\documentclass[a4paper,reqno,11pt]{amsart}
\usepackage{t1enc}
\usepackage[margin=1.0in]{geometry}
\usepackage{ifthen}
\usepackage{graphicx}
\usepackage{amsmath,amstext,amssymb,amsopn,amsthm,color}

\newcommand{\del}[1]{\delta_1\left(#1\right)}
\newcommand{\deldB}[1]{\delta_B^2(#1)}
\newcommand{\delB}[1]{\delta_B(#1)}
\newcommand{\Hh}{H_0}

\newcommand{\kernel}{k}
\newcommand{\kk}{k_1}
\newcommand{\kt}{k(t,x,y)}
\newcommand{\kkt}{k_1(t,x,y)}

\newcommand{\anxy}{\alpha_{xy}}

\newcommand{\R}{\mathbf{R}}

\newcommand{\pr}{\mathbf{P}}
\newcommand{\ex}{\mathbf{E}}

\theoremstyle{plain}
\newtheorem{theorem}{Theorem}
\newtheorem{lemma}{Lemma}
\newtheorem{corollary}{Corollary}
\newtheorem{proposition}{Proposition}

\theoremstyle{definition}

\newtheorem{remark}{Remark}

\theoremstyle{remark}

\newcommand{\formula}[2][nolabel]
{\ifthenelse{\equal{#1}{nolabel}}
 {\begin{align*} #2 \end{align*}}
 {\ifthenelse{\equal{#1}{}}
  {\begin{align} #2 \end{align}}
  {\begin{align} \label{#1} #2 \end{align}}
 }
}

%
%

\sloppy
\numberwithin{equation}{section}

\begin{document}

%
%

\title[Dirichlet heat kernel for the Laplacian in a ball]{Dirichlet heat kernel for the Laplacian in a ball}
\author{Jacek Ma{\l}ecki, Grzegorz Serafin}
\address{Jacek Ma{\l}ecki, Grzegorz Serafin \\ Faculty of Pure and Applied Mathematics \\  Wroc{\l}aw University of Science and Technology \\ ul.
Wybrze{\.z}e Wyspia{\'n}\-skiego 27 \\ 50-370 Wroc{\l}aw,
Poland
}
\email{jacek.malecki@pwr.edu.pl, grzegorz.serafin@pwr.edu.pl}

\keywords{}
\subjclass[2010]{35K08, 60J65}

\thanks{Jacek Ma\l{}ecki was supported by the National Science Centre grant no. 2013/11/D/ST1/02622. Grzegorz Serafin was financed
by the National Science Centre grant no. 015/18/E/ST1/00239.}

\begin{abstract} 
  We provide sharp two-sided estimates on the Dirichlet heat kernel $\kkt$ for the Laplacian in a ball. The result accurately describes the exponential behaviour of the kernel for small times and significantly improves the qualitatively sharp results known so far. As a consequence we obtain the full description of the kernel $\kkt$ in terms of its global two-sided sharp estimates.
\end{abstract}

\maketitle

\section{Introduction}
\label{sec:introduction}
Let $n\geq 1$ and denote by $\kkt$ the heat kernel of the Dirichlet Laplacian in the unit ball $B(0,1)=\{x\in \R^n: |x|<1\}$. The main result of the paper is the following theorem providing sharp two-sided estimates of $\kkt$ for the whole range of the space parameters $x,y\in B(0,1)$ and small times $t$.

\begin{theorem}
\label{thm:main}
For every $n\geq 1$ and $T>0$ there exists constant $C=C(n,T)>1$  such that 
\begin{eqnarray}
\label{eq:main}
\frac{1}{C}\,  \frac{h(t,x,y)}{t^{n/2}}\exp\left({-\frac{|x-y|^2}{4t}}\right)\leq \kkt\leq C\, \frac{h(t,x,y)}{t^{n/2}}\exp\left({-\frac{|x-y|^2}{4t}}\right)
\end{eqnarray}
for every $|x|,|y|<1$ and $t<T$, where 
\begin{align}
\label{eq:htxy:est}
  h(&t,x,y) =\left(1\wedge\frac{(1-|x|)(1-|y|)}t\right)+\left(1\wedge\frac{(1-|x|)|x-y|^2}t\right)\left(1\wedge\frac{(1-|y|)|x-y|^2}t\right)\/.
\end{align}
\end{theorem}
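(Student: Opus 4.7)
The approach I would take is the Hunt-type decomposition
\[
\kkt = g(t,x,y) - \ex^x\bigl[g(t-\tau,X_\tau,y)\ind_{\{\tau<t\}}\bigr],
\]
where $g(t,x,y)=(4\pi t)^{-n/2}\exp(-|x-y|^2/(4t))$ is the free Gaussian kernel, $X$ is a Brownian motion on $\R^n$, and $\tau$ is its first exit time from $B(0,1)$. This representation links the desired bound directly to the law of $(\tau,X_\tau)$, while automatically producing the correct exponent $|x-y|^2/(4t)$ coming from $g$. The heart of the problem is therefore sharp control of the exit distribution on short time scales, which by spherical symmetry of the ball can be reduced to a one-dimensional radial analysis.

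I would then split the argument according to the geometry of $(t,x,y)$. When both points are well inside the ball, say $1-|x|,1-|y|\geq 1/2$, one has $h(t,x,y)\asymp 1$ and the claim reduces to the standard short-time Gaussian bound on $\kkt$, which follows from domain monotonicity and a local computation. For points near the boundary in the near-diagonal regime $|x-y|^2\lesssim t$, one has $h\asymp 1\wedge\frac{(1-|x|)(1-|y|)}{t}$; this factor is exactly what the half-space approximation yields, since the tangent half-space Dirichlet kernel satisfies
\[
g_H(t,x,y)\asymp\Bigl(1\wedge\tfrac{x_ny_n}{t}\Bigr) g(t,x,y),
\]
and a localization argument, together with boundary-Harnack-type comparisons, transfers the estimate from the half-space to $B(0,1)$.

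The remaining, and central, case is the off-diagonal boundary regime $|x-y|^2\gg t$, where the product $\bigl(1\wedge\frac{(1-|x|)|x-y|^2}{t}\bigr)\bigl(1\wedge\frac{(1-|y|)|x-y|^2}{t}\bigr)$ dominates $h$. For the upper bound here I would substitute sharp exit-distribution estimates into the Hunt formula and integrate against the exit time and exit position, using the spherical symmetry to reduce the Gaussian factor to a single effective chord. For the lower bound I would apply Chapman--Kolmogorov with a polygonal chain $x=z_0,z_1,\dots,z_N=y$: the intermediate points are placed so that each leg of length $\sqrt{t/N}$ falls into the interior Gaussian regime, while the middle of the chain is pulled away from $\partial B(0,1)$ to distance $\gtrsim\sqrt{t/N}$, so that the boundary factors remain $\asymp 1$ except at the endpoints, producing the required product.

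The main obstacle, in my view, is the emergence of the second summand of $h$ in this off-diagonal boundary regime. A direct half-space comparison undercounts the contribution of paths that traverse a long distance $|x-y|\gg\sqrt{t}$ while staying near $\partial B(0,1)$: for such paths the conditional probability of avoiding the boundary involves the quadratic factor $|x-y|^2$ in an essential way, reflecting that a nearly straight path of length $|x-y|$ and duration $t$ crosses a thin layer of width $\sim t/|x-y|$ near the boundary, so that the relevant boundary distances are $(1-|x|),(1-|y|)$ measured against $t/|x-y|^2$ rather than against $t$. Pinning down both sides of this subtle scaling to sharp constants is what distinguishes the theorem from the qualitatively sharp bounds known beforehand, and is where the technical heart of the proof lies.
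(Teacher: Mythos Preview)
Your case splitting and use of the Hunt formula are in line with the paper, but there is a circularity in your plan for the upper bound in the off-diagonal boundary regime. To bound $\kkt$ from \emph{above} via the Hunt representation you would need a \emph{lower} bound on the subtracted integral, hence a lower bound on the exit density $q_x(s,z)$. Sharp bounds on $q_x$ are, however, obtained from the heat-kernel bounds through the normal-derivative identity $q_x(t,z)=\frac{\partial}{\partial n_z}\kk(t,x,z)$; the paper derives them only as a \emph{corollary} of the theorem, not as an input. The paper's actual upper bound in this regime avoids Hunt entirely. It uses domain monotonicity $\kkt\le \kernel_{H_x\cap H_y}(t,x,y)$ with the intersection of the two tangent half-spaces, and then a double-reflection ``four-point'' inequality
\[
\kernel_{H_x\cap H_y}(t,x,y)\le \kernel(t,x,y)-\kernel(t,\bar x,y)-\kernel(t,x,\bar y)+\kernel(t,\bar x,\bar y),
\]
where $\bar x,\bar y$ are the reflections of $x,y$ across $\partial H_x,\partial H_y$. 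Algebraic expansion of the four Gaussians, together with the relation $1-|x|\cos\anxy\approx|x-y|^2$ valid in the relevant range, is precisely what manufactures the product $\bigl(1\wedge\frac{(1-|x|)|x-y|^2}{t}\bigr)\bigl(1\wedge\frac{(1-|y|)|x-y|^2}{t}\bigr)$. This reflection device is the genuinely new idea in the upper bound and is absent from your outline.

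For the lower bound the paper's architecture also differs from your polygonal chain. It uses a \emph{single} Chapman--Kolmogorov step through a neighbourhood of the midpoint $(x+y)/2$, exploiting the parallelogram inequality $\del{(x+y)/2}\gtrsim|x-y|^2+\del{x}+\del{y}$: this pushes the intermediate point away from the boundary by exactly $|x-y|^2$, which is how that factor enters the boundary term. The base case (both points in a small ball tangent to $\partial B(0,1)$) is handled via the Hunt formula, but now legitimately: one inserts the \emph{upper} bound on $q_x$, already established, to show that $\kernel_{H}-\kkt$ is a small fraction of $\kernel_H$. A chain with $N\sim|x-y|^2/t$ legs of length $\sqrt{t/N}$, as you propose, does not obviously yield the sharp exponent $|x-y|^2/(4t)$ and the correct endpoint boundary factors simultaneously; any bending of the chain to gain distance from the boundary costs in the exponent, while a straight chain leaves the first and last legs in the boundary regime without the $|x-y|^2$ enhancement.
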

Due to the translation invariance and the scaling property of the Laplacian in $\R^n$, one can immediately obtain the corresponding result for the ball $B(x_0,r)$ with a radius $r>0$ and a center at $x_0\in\R^d$. The long-time behaviour (i.e. for $t\geq T$, where $T>0$ is fixed) of $\kkt$ can be easily deduce from the general theory (see \cite{Davies:1987}, \cite{DaviesSimon:1984}), i.e. there is a comparability between $\kkt$ and 
\begin{eqnarray*}
 (1-|x|)(1-|y|)e^{-\lambda_1 t}\/,
\end{eqnarray*}
for every $|x|,|y|<1$ and $t\geq T$, where $\lambda_1$ stands for the first eigenvalue of $-\Delta$ on $B(0,1)$. Note that this kind of result can be derived from the spectral series representation of the kernel $\kkt$ in terms of the eigenfunctions and eigenvalues of the Laplacian in the ball (see for example \cite{H}), i.e. it can be shown that for large times $t$ the first component of the series dominates the others. However, this representation is ineffective for small $t$, when we have to deal with the cancellations of highly oscillating series. Combining the long time behaviour result stated above together with Theorem \ref{thm:main} we easily obtain the global sharp two-sided estimates.

\begin{corollary}
\label{cor:globalestimates}
For every $n\geq 1$, there exists constant $C_1=C_1(n)>1$ such that 
\begin{align*}
   \frac{1}{C_1}\frac{h(t\wedge 1, x,y)}{(t\wedge 1)^{n/2}}\exp\left(-\frac{|x-y|^2}{4t}-\lambda_1 t\right)&\leq \kkt\leq C_1\frac{h(t\wedge 1, x,y)}{(t\wedge 1)^{n/2}}\exp\left(-\dfrac{|x-y|^2}{4t}-\lambda_1 t\right)
\end{align*}
for every $x,y\in B(0,1)$ and $t>0$\/.
\end{corollary}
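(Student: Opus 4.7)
The plan is to split at $t = 1$ and glue the short-time result of Theorem~\ref{thm:main} (applied with $T = 1$) to the classical long-time asymptotic recalled immediately above the statement of Corollary~\ref{cor:globalestimates}, namely that $\kkt$ is comparable to $(1-|x|)(1-|y|)e^{-\lambda_1 t}$ for $t \geq 1$ and $x, y \in B(0,1)$, with constants depending only on $n$.

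For $t \leq 1$ almost no work is needed: Theorem~\ref{thm:main} with $T = 1$ already delivers the claimed two-sided bound because $t \wedge 1 = t$, and the missing exponential factor $e^{-\lambda_1 t} \in [e^{-\lambda_1}, 1]$ is a bounded constant that can be absorbed into $C_1$.

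For $t \geq 1$ the right-hand side of the corollary becomes $h(1,x,y)\exp(-|x-y|^2/(4t) - \lambda_1 t)$, so the task reduces to proving the uniform comparability
\[
h(1,x,y)\exp\!\left(-\frac{|x-y|^2}{4t}\right) \asymp (1-|x|)(1-|y|), \qquad x,y \in B(0,1),\ t \geq 1.
\]
The Gaussian factor lies in $[e^{-1}, 1]$ since $|x-y|^2 \leq 4 \leq 4t$. For $h(1,x,y)$ itself, the first summand in \eqref{eq:htxy:est} collapses to $(1-|x|)(1-|y|)$ as this product is already smaller than $1$, while the second summand is bounded from above by $(1-|x|)(1-|y|)|x-y|^4 \leq 16(1-|x|)(1-|y|)$; hence $h(1,x,y) \asymp (1-|x|)(1-|y|)$. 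Combining with the long-time estimate closes the case $t \geq 1$.

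There is no real obstacle here: the entire argument is elementary bookkeeping built on top of the two substantial inputs (Theorem~\ref{thm:main} and the classical long-time bound). The only point to verify carefully is that all implicit constants depend on $n$ alone, which holds because the threshold $T = 1$ is fixed once and for all.
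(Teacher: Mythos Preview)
Your proposal is correct and follows exactly the approach the paper indicates: split at $t=1$, invoke Theorem~\ref{thm:main} with $T=1$ for small times, and for large times combine the classical long-time comparability $\kkt\approx(1-|x|)(1-|y|)e^{-\lambda_1 t}$ with the elementary verification that $h(1,x,y)\approx(1-|x|)(1-|y|)$ and $e^{-|x-y|^2/(4t)}\approx 1$. The paper does not spell out any of these details, so there is nothing further to compare.
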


Studies on the behaviour of heat kernels related to various kinds of operators and domains or manifolds have very long history and there is an enormous number of research papers on this topic including many beautiful and general results (see, among others, \cite{Berg:1990}, \cite{CheegerYau:1981}, \cite{CMM:2000}, \cite{GrigorSaloff:2002}, \cite{KimSong:2006}, \cite{Zhang:2003} and the references therein). On the other hand, it is difficult to imagine more classical example than the Laplace operator in smooth bounded domain and a unit ball is definitely the most basic example of such set. Nevertheless, such accurate result as in Theorem \ref{thm:main} has not been known until now except the one dimensional case. More precisely, for $n=1$ we have $B(0,1)=(-1,1)$ and, apart from  the usual spectral representation, the representation in terms of the series of differences of exponents is available. It leads to the estimates (see for example \cite{PSZ}) 
\begin{eqnarray}
   \label{eq:estimates:d1}
	  \kkt \approx \left(1\wedge \frac{(x+1)(y+1)}{t}\right)\left(1\wedge \frac{(1-x)(1-y)}{t}\right)\frac{1}{\sqrt{t}}\exp\left(-\frac{(x-y)^2}{4t}\right)\/,
\end{eqnarray}
for every $x,y\in (-1,1)$ and $t$ small enough. Although it requires some effort, it can be shown that the product of the two minimums above is comparable to $h(t,x,y)$. We emphasize that the one-dimensional case is significantly different from the multidimensional case $n\geq 2$, since for $n=1$ ``being close to the boundary'' just means ``being close to $-1$ or $1$'', which makes the consideration much simpler.

To outline the context of Theorem \ref{thm:main} we recall the upper-bounds for $\kkt$ provided by E.~B.~Davis in \cite{Davies:1987}. The result relates to much more general setting of bounded $C^{1,1}$ domains, but limited only to the case of a unit centered ball it ensures existence of constants $c_1,c_2>0$  and $T>0$ such that
\begin{eqnarray*}
  \label{eq:Davies:est}
   \kkt \leq \left[\frac{(1-|x|)(1-|y|)}{t}\wedge 1\right]\frac{c_1}{t^{d/2}}\exp\left({-c_2\frac{|x-y|^2}{t}}\right)
\end{eqnarray*}
for every $x,y\in B(0,1)$ and $t<T$. These bounds were complemented by Q.~S.~Zhang in \cite{Zhang:2002}, who proved that for some $c_3,c_4>0$
\begin{eqnarray*}
\label{eq:Zhang:est}
   \kkt \geq\left[\frac{(1-|x|)(1-|y|)}{t}\wedge 1\right]\frac{c_3}{t^{d/2}}\exp\left({-c_4\frac{|x-y|^2}{t}}\right)
\end{eqnarray*}
for every $x,y\in B(0,1)$ and $t$ small enough. As a consequence, we obtain quantitatively sharp estimates of $\kkt$. The obvious difference between the Davies-Zhang's result and the estimates given in Theorem \ref{thm:main} is that the latter accurately describes the exponential behaviour of $\kkt$, i.e. there are no different constants in the exponential factors in the lower and upper bounds. According to  \cite{Berg:1990} we expect that the exponential behaviour of $\kkt$ for small $t$ should be the same as in the case of the Gaussian kernel 
\begin{eqnarray}
   \label{eq:Gauss}
  \kt = \frac{1}{(4\pi t)^{n/2}}\exp\left(-\frac{|x-y|^2}{4t}\right)\/,\quad t>0\/,\quad x,y\in\R^n\/.
\end{eqnarray}
However, as Theorem \ref{thm:main} shows, it is not possible to get $c_2=c_4=1/4$ in the Davies-Zhang estimates, since the sharp estimates require modification of the non-exponential terms and appearance of the factor $h(t,x,y)$ described above. The form of the  factor is new and has not appeared, up to the  best
knowledge of the authors, in the literature so far.

The heat kernel $\kkt$ has the very well-known probabilistic interpretation as the transition probability density of the $n$-dimensional Brownian motion  killed when reaching the boundary of the ball. Thus, it can be expressed in terms of the Gauss kernel $\kt$ and the distribution of the first hitting time and hitting place of the sphere by the Brownian motion, i.e. the Hunt formula holds (see \eqref{eq:Hunt:general} below). On the other hand, the density of the joint distribution of the first hitting time and hitting place is a normal derivative of $\kkt$ (see \eqref{eq:hx:diff}) and consequently Theorem \ref{thm:main} immediately leads to its sharp two-sided estimates (see Corollary \ref{cor:hxtz:estimates}).  

The main advantage of the Hunt formula compared to the series representation is the simple fact that we represent the heat kernel as a difference of two non-negative expressions, which is much simpler to deal with than with the series of oscillating components. This approach has been successfully used in \cite{MSZ:2016} to study the short time behaviour of the Fourier-Bessel heat kernel. Since the Hunt formula is the starting point, we use several probabilistic tools and ideas in the proof of the main result. However, some parts of the proof are purely analytical. In both approaches we try to use as much geometric arguments as possible to make the proof simpler and applicable in other contexts and potential extensions.

Finally, the result stated in Theorem \ref{thm:main} should be discussed in the context of the famous Mark Kac's principle of not feeling the boundary stated in \cite{Kac:1951}. Restricting the result to the case of the ball, Kac showed that $k_1(t,x,y)\sim 1/(4\pi t)\exp(-|x-y|^2/(4t))$ (in $\R^2$) as $t\to 0$, where $x,y$ are fixed, i.e. the behaviour of $k_1(t,x,y)$ and $k(t,x,y)$ are the same in this sense, when $t$ goes to zero. He described this phenomenon in his famous paper \cite{Kac:1966} by saying 
\begin{center}
   \emph{As the Brownian particles begin to diffuse they are not aware, so to speak,\\ of
the disaster that awaits them when they reach the boundary}
\end{center}
Following this poetic language, we can now say that the Brownian particles do not have death premonitions when they begin to diffuse (the exponential behaviours of $k_1$ and $k$ are the same), but they are afraid of death by rational judgement of the distances to the threat of the starting and the final points, the length of the road between them and the time in which they should overcome this path (described in details by $h(t,x,y)$).

\section{Preliminaries}
\subsection{Notation}
We write $f\approx g$ whenever there exists constant $c>1$ depending only on a dimension $n$ such that $c^{-1}\leq f/g\leq c$ holds for the indicated range of the arguments of functions $f$ and $g$. Similarly, we write $f\lesssim g$ (or $f\gtrsim g $) if we have $f\leq c g$ ($f\geq c g$) for some constant $c>0$ depending only on $n$. If the constants appearing in the estimates depend on some other parameters, it will be indicated by placing those parameters above the sings $\approx$, $\lesssim$ and $\gtrsim$.

By $|x|$ we denote the Euclidean norm of a point $x\in \R^n$ and write $\anxy$ for the (smaller) angle between non-zero vectors $x$ and $y$. Moreover, we put $\anxy=0$ if $x$ or $y$ is zero. We write $B(x_0,r)=\{x\in\R^n:|x-x_0|<r\}$ for a ball of a radius $r>0$ centered at $x_0\in\R^n$ and $S(x_0,r)=\{x\in\R^n:|x-x_0|=r\}$ stands for the corresponding sphere. In the basic case $x_0=0$ and $r=1$ we write $d\sigma(z)$, $z\in S(0,1)$, for the spherical measure. For $x\in B(0,1)$, $x\neq 0$, we indicate by $H_x$ the half-space containing the unit ball $B(0,1)$ and such that its boundary hyperplane is tangent to the sphere $S(0,1)$ at the point $x/|x|$. In the special case $x/|x|=(1,0\ldots,0)$ we omit the subscript in the notation and we simply write $H=\{x\in\R^n:x_1<1\}$. For a general hyperplane $L$, we denote by $P_L(x)$ the reflection of $x$ with respect to $L$. In particular, we have
\begin{eqnarray}
\label{eq:reflection:plane}
P_{\partial H}(x)=(2-x_1,x_2,\ldots,x_n)\/,\quad x=(x_1,\ldots,x_n)\in\R^n\/.
\end{eqnarray}
Moreover, we put
\begin{eqnarray*}
\bar x=\frac{2-|x|}{|x|}\,x\/,
\end{eqnarray*}
whenever $x\neq 0$. If $x\in B(0,1)$, then $\bar{x}$ is a reflection of the point $x$ with respect to the hyperplane tangent to $S(0,1)$ at a point $x/|x|$, i.e. $\bar{x} = P_{\partial H_x}(x)$.

For a general set $D\subset \R^n$ and $x\in D$ we write $\delta_D(x)$ for a distance of $x$ to the boundary $\partial D$. As previously, we shorten the notation in the case of $D=B(0,1)$ and just write $\del{x} = \delta_{B(0,1)}(x)=1-|x|$. For every $x,y\in B(0,1)$, by the parallelogram law, we have
\begin{eqnarray*}
  \label{eq:parallel:basic}
  1-\left|\frac{x+y}{2}\right|^2 = \frac{|x-y|^2}{4}+\frac{1-|x|^2}{2}+\frac{1-|y|^2}{2}
\end{eqnarray*}
and consequently, since $ 2\left(1-\left|\frac{x+y}{2}\right|\right)\geq 1-\left|\frac{x+y}{2}\right|^2$, we obtain
\begin{eqnarray}
  \label{eq:parallel:low}
  \delta_1\left(\frac{x+y}{2}\right)\geq \frac{|x-y|^2}{8}+\frac{1-|x|}{4}+\frac{1-|y|}{4} \/.
\end{eqnarray}

\subsection{Brownian motion}
We consider $n$-dimensional Brownian motion $W=(W_t)_{t\geq 0}$ starting from $x\in\R^n$ and we denote by $\pr^x$ and $\ex^x$ the corresponding probability law and the expected value. Obviously $\pr^x$ is absolutely continuous with respect to the Lebesgue measure and $\kt$ is the corresponding transition probability density. 

The next lemma will be frequently used in the sequel. One can interpret the result, in the probabilistic context, by saying that Brownian motion going from $x$ to $y$ in time $2t$ is mostly at time $t$ passing through a neighbourhood of the midpoint $(x+y)/2$ of a size comparable to $\sqrt{t}$. In fact, we can move away from $(x+y)/2$ at a distance not greater then multiplicity of $\sqrt{t}$. 
\begin{lemma}
  \label{lemma:kt:lower}
  For every $c,l>0$ we have
	\begin{eqnarray*}
	   \int_{B(a,c\sqrt{t})}\kernel(t,x,z)\kernel(t,z,y)\,dz \stackrel{c,l}{\approx} \kernel(2t,x,y)
	\end{eqnarray*}
	for every $x,y\in \R^n$, $t>0$ and $a\in \R^n$ such that $|a-\frac{x+y}{2}|\leq l\sqrt{t}$.
\end{lemma}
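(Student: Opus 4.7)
The plan is to reduce the integral to a standard Gaussian computation via completing the square, which is the classical polarization/Chapman--Kolmogorov identity done ``by hand.''

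First I would combine the two Gaussian factors. Using the identity
\formula{
|x-z|^2+|z-y|^2 = 2\left|z-\tfrac{x+y}{2}\right|^2+\tfrac{1}{2}|x-y|^2,
}
we get
\formula{
\kernel(t,x,z)\kernel(t,z,y)=\frac{1}{(4\pi t)^{n}}\exp\!\left(-\frac{|z-(x+y)/2|^2}{2t}-\frac{|x-y|^2}{8t}\right).
}
The exponential in $|x-y|^2/(8t)$ factors out of the $z$-integral and, up to a purely dimensional constant, matches the exponential in $\kernel(2t,x,y)=(8\pi t)^{-n/2}\exp(-|x-y|^2/(8t))$. So everything reduces to showing that
\formula{
I:=\int_{B(a,c\sqrt{t})}\exp\!\left(-\frac{|z-(x+y)/2|^2}{2t}\right)dz\stackrel{c,l}{\approx} t^{n/2}.
}

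Next I would perform the change of variables $u=(z-(x+y)/2)/\sqrt{t}$, giving $I=t^{n/2}\int_{B(m,c)}e^{-|u|^2/2}du$, where $m=(a-(x+y)/2)/\sqrt{t}$ satisfies $|m|\le l$ by hypothesis. The upper bound $\int_{B(m,c)}e^{-|u|^2/2}du\le(2\pi)^{n/2}$ is immediate. For the lower bound, every $u\in B(m,c)$ satisfies $|u|\le l+c$, so
\formula{
\int_{B(m,c)}e^{-|u|^2/2}du\ge e^{-(l+c)^2/2}\,|B(0,c)|,
}
and this constant depends only on $n,c,l$, as required.

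Combining these two estimates yields $\int_{B(a,c\sqrt{t})}\kernel(t,x,z)\kernel(t,z,y)\,dz \stackrel{c,l}{\approx} t^{n/2}/(4\pi t)^{n}\cdot\exp(-|x-y|^2/(8t))$, which agrees with $\kernel(2t,x,y)$ up to a dimensional constant. There is essentially no obstacle here; the only thing to be careful about is that the comparability constants in the lower bound do depend on both $c$ and $l$ (through the factor $e^{-(l+c)^2/2}$), which is exactly what is allowed by the statement.
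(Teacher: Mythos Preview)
Your proof is correct and follows essentially the same approach as the paper: both complete the square via $|x-z|^2+|z-y|^2=2|z-(x+y)/2|^2+\tfrac12|x-y|^2$, factor out the $|x-y|^2/(8t)$ exponential, and then estimate the remaining Gaussian integral after rescaling by $\sqrt{t}$. The only cosmetic difference is that the paper first rotates so that $(x+y)/2=0$, inscribes a cuboid in $B(a,c\sqrt t)$, and bounds the integral coordinate-by-coordinate, whereas you work directly on the ball and use $|u|\le l+c$; your route is slightly cleaner but the idea is identical.
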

\begin{proof}
The upper estimates are obvious and come directly from the Chapman-Kolmogorov identity. Thus, we focus only on the lower bounds. 
Without loss of generality we can and we do assume that $x=(-|x-y|/2,0,...,0)$,  $y=(|x-y|/2,0,...,0)$. Then $(x+y)/2=0$. Let $c$ and $l$ be fixed positive constants. For every $x,y\in \R^n$ of the form indicated above, $t>0$ and $a=(a_1,\ldots,a_n)\in \R^n$ we can write
\begin{eqnarray*}
   \int_{B(a,c\sqrt{t})}\kernel(t,x,z)\kernel(t,z,y)dz \geq \int_{a_1-c\sqrt{t/n}}^{a_1+c\sqrt {t/n}}...\int_{a_n-c\sqrt  {t/n}}^{a_n+c\sqrt {t/n}}\kernel(t,x,z)\kernel(t,z,y)dz_1...dz_n
\end{eqnarray*}
and the inequality follows since the ball $B(a,c\sqrt{t})$ contains the cuboid 
\begin{eqnarray*}
   (a_1-c\sqrt{t/n}, a_1+c\sqrt{t/n})\times \ldots\times (a_n-c\sqrt{t/n}, a_n+c\sqrt{t/n})\/.
\end{eqnarray*}
Due to the special form of $x$ and $y$, we can easily show that 
\begin{eqnarray*}
  |x-z|^2+|y-z|^2 &=& \frac{|x-y|^2}{2} + 2|z|^2
\end{eqnarray*}
and consequently
\begin{eqnarray*}
  \kernel(t,x,z)\kernel(t,z,y)= \frac{1}{(4\pi t)^{n}}\exp\left(-\frac{|x-y|^2}{8t}\right)\exp\left(-\frac{|z|^2}{2t}\right)\/.
\end{eqnarray*}
Combining all together we obtain the lower bound of the form
\begin{eqnarray*}
\frac{1}{(4\pi t)^{n}}\exp\left(-\frac{|x-y|^2}{8t}\right)\int_{a_1-c\sqrt{t/n}}^{a_1+c\sqrt {t/n}}...\int_{a_n-c\sqrt  {t/n}}^{a_n+c\sqrt {t/n}}e^{-\frac{|z|^2}{2t}}dz_1\ldots dz_n\/.
\end{eqnarray*}
Finally, assuming that $|a-(x+y)/2|=|a|\leq l\sqrt{t}$, which implies $|a_i|\leq l\sqrt{t}$, we get
\begin{eqnarray*}
   \prod_{i=1}^n \left(\int_{a_i-c\sqrt{t/n}}^{a_i+c\sqrt{t/n}}e^{-\frac{z_i^2}{2t}}dz_i\right) &=& t^{n/2}
  \prod_{i=1}^n	\left(\int_{a_i/\sqrt{t}-c/\sqrt{n}}^{a_i/\sqrt{t}+c/\sqrt{n}}e^{-\frac{u^2}{2}}du\right)
	> t^{n/2}\left(\frac{2c}{\sqrt{n}}e^{-\frac{(l+c/\sqrt{n})^2}{2}}\right)^n
	\end{eqnarray*}
	and we arrive at
	\begin{eqnarray*}
	 \int_{B(a,c\sqrt{t})}  \kernel(t,x,z)\kernel(t,z,y)\,dz \stackrel{n,c,l}{\gtrsim} \, \kernel(2t,x,y)\/.
	\end{eqnarray*}

\end{proof}

For a general smooth domain $D\subset \R^d$ we define the first exit time from $D$ by
\begin{eqnarray*}
   \tau_D = \inf\{t>0: W_t\notin D\}\/.
\end{eqnarray*}
We write $\kernel_D(t,x,y)$ for the transition probability density for $W^{D}=(W^D_{t})_{t\geq 0}$ Brownian motion killed upon leaving a set $D$. To shorten the notation we write $\tau_1$ (and obviously $\kkt$) in the case $D=B(0,1)$. The relation between $\kernel_D(t,x,y)$ and $\kt$ together with the joint distribution of $(\tau_D, W_{\tau_D})$ is described by the Hunt formula
\begin{eqnarray*}
   \label{eq:Hunt:general}
	\kernel_D(t,x,y) = \kt-\ex^x[t>\tau_D, W_{\tau_D},y]\/,\quad x,y\in D\/,\quad t>0\/.
\end{eqnarray*}
Due to the reflection principle, the case of a half-space is quite special. More precisely, for $H=\{x\in \R^n: x_1<1\}$  we can write $k_H(t,x,y)$ explicitly as follows
\begin{eqnarray*}
   \kernel_H(t,x,y) = \kt-\kernel(t,x,P_{\partial H}(y))\/,
\end{eqnarray*}
where $P_{\partial H}(y)=(2-y_1,y_2,\ldots,y_n)$ as defined in \eqref{eq:reflection:plane}. Since
\begin{eqnarray*}
   \kernel(t,x,P_{\partial H}(y)) &=& \exp\left(-\frac{(1-y_1)(1-x_1)}{t}\right)\kt\/,\quad x,y\in\R^n
\end{eqnarray*}
and $\delta_H(x)=1-x_1$, $\delta_H(y)=1-y_1$ we immediately get
\begin{eqnarray}	
	\label{eq:H:estimates}
	\kernel_H(t,x,y) = \kt-\kernel(t,x,P_{\partial H}(y)) &\approx& \left(1\wedge\frac{\delta_H(x)\delta_H(y)}{t}\right)\kt\/,x,y\in H\/. 
\end{eqnarray}
The last estimates hold for every half-space, since both sides are rotationally and translationally invariant. Such transparent formula and estimates are no longer available in the considered case of a unit ball and we have to start from the general formula
\begin{eqnarray}
\label{eq:Hunt:ball}
   \kkt = \kt-\int_0^t \int_{|z|=1}\kernel(t-s,z,y)q_z(s,z)dsd\sigma(z)\/,
\end{eqnarray}
where $q_x(t,z)$ denotes the density function of the joint distribution $(\tau_1,W_{\tau_1})$ for the process starting from $x\in B(0,1)$. Note also that we can recover $q_x(t,z)$ from $\kkt$ by differentiating in the norm direction (see \cite{H})
\begin{eqnarray}
 \label{eq:hx:diff}
  q_x(t,z) = \dfrac{\partial}{\partial n_z}\kernel_1(t,x,z)\/,\quad |x|<1\/,|z|=1\/,t>0\/.
\end{eqnarray}
Thus, as we have mentioned in Introduction, dividing the estimates in \eqref{eq:main} by $(1-|y|)$ and taking a limit as $y\to z\in S(0,1)$ we obtain the following
\begin{corollary}
\label{cor:hxtz:estimates}
For every $T>0$ wa have
\begin{eqnarray}
\label{eq:hxtz:estimates}
q_x(t,z)\approx \left(\frac{1-|x|}{t}+\frac{|x-z|^2}{t}\left(1\wedge \frac{(1-|x|)|x-z|^2}{t}\right)\right)\kernel(t,x,z)
\end{eqnarray}
whenever $|x|<1$, $|z|=1$ and $t<T$.
\end{corollary}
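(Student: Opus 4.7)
The plan is to combine the two-sided estimates of Theorem \ref{thm:main} with the representation \eqref{eq:hx:diff} of $q_x(t,z)$ as a boundary normal derivative of $k_1$. Fix $z\in S(0,1)$ and parametrize the approach to $z$ along the inward normal by $y=(1-s)z$, $s\in(0,1)$; then $\delta_1(y)=1-|y|=s$ and \eqref{eq:hx:diff} reduces to
\formula{
q_x(t,z)=\lim_{s\to 0^+}\frac{k_1(t,x,(1-s)z)}{s}.
}

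Next I would feed the bounds \eqref{eq:main} directly into the ratio on the right: for every sufficiently small $s>0$, the quantity $k_1(t,x,y)/s$ is sandwiched between $C^{-1}$ and $C$ times $h(t,x,y)\, s^{-1}\, t^{-n/2}\exp(-|x-y|^2/(4t))$, with $C=C(n,T)$ independent of $s$. The Gaussian factor passes to the limit trivially and gives a constant multiple of $k(t,x,z)$, so the content of the proof is the computation of $\lim_{s\to 0^+}h(t,x,y)/s$.

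For this I would observe that once $s\leq t/(1-|x|)$ and $s|x-y|^2\leq t$ --- both of which eventually hold since $|x-y|\to|x-z|$ stays bounded and $t<T$ --- the two minimums in \eqref{eq:htxy:est} that involve $s=1-|y|$ are attained by their second argument, yielding
\formula{
\frac{h(t,x,y)}{s}=\frac{1-|x|}{t}+\left(1\wedge\frac{(1-|x|)|x-y|^2}{t}\right)\frac{|x-y|^2}{t}
}
for all such small $s$. This expression converges by continuity as $y\to z$ to precisely the bracketed factor in \eqref{eq:hxtz:estimates}; applying $\liminf$ and $\limsup$ to the sandwich then transfers the comparability constant $C$ to the stated estimate for $q_x(t,z)$.

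I do not anticipate any real obstacle: the argument is a clean passage to the limit. The only point needing attention is the verification that the thresholds on $s$ trivialize the $1\wedge\cdot$ terms containing the factor $1-|y|$, which is immediate from the fact that $|x-y|$ stays bounded while $1-|y|\to 0$; the remaining minimum, $1\wedge(1-|x|)|x-y|^2/t$, is continuous in $y$ at $z$ and survives the limit unchanged.
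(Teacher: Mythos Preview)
Your proposal is correct and follows exactly the route indicated in the paper: the paper's entire argument for this corollary is the single sentence ``dividing the estimates in \eqref{eq:main} by $(1-|y|)$ and taking a limit as $y\to z\in S(0,1)$,'' and you have simply spelled out the details of that passage to the limit along the inward normal. The only additional content you supply is the (routine) verification that the two minimums containing the factor $1-|y|$ trivialize for small $s$, which is precisely what makes the limit of $h(t,x,y)/(1-|y|)$ exist and equal the bracketed expression in \eqref{eq:hxtz:estimates}.
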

This extends estimates from \cite{S}, where the exit time (without its dependence on exit place) density from the ball was discussed.  
\begin{remark}
  Although the statement of Theorem \ref{thm:main} covers the case of $t<T$ for fixed $T>0$, we emphasize that it is enough to show the estimates for $t$ small enough. Indeed, knowing $\kkt\approx h(t,x,y)\kernel(t,x,y)$ for $t<t_0$ for some $t_0>0$ we can easily replace $t_0$ by $2t_0$ and consequently by any other constant $T>0$. To see that notice the estimates $h(t,x,y)k(t,x,y)\approx (1-|x|)(1-|y|)$ holding whenever $t$ is bounded away from $0$ and infinity. Thus, by the Chapmann-Kolmogorov equation, we simply get
	\begin{eqnarray*}
	   \kk(t,x,y) &=&\int_{B(0,1)}\kk(t/2,x,z)\kk(t/2,z,y)dz\stackrel{t_0}{\approx} (1-|x|)(1-|y|)\int_{B(0,1)}(1-|z|)^2dz\\
		&\stackrel{t_0}{\approx}& h(t,x,y)\kernel(t,x,y)\/,
	\end{eqnarray*}
	whenever $t_0\leq t\leq 2t_0$. Therefore, from now on we will focus only on estimates for $t$ sufficiently small.
\end{remark}

\section{Upper bounds}
We begin with a very simple result providing upper bounds of the following form.
\begin{lemma}
   We have
	\begin{eqnarray}
	   \kkt\lesssim \left(1\wedge \frac{1-|x|}{t}\right)\left(1\wedge \frac{1-|y|}{t}\right)\kt
	\end{eqnarray}
	for every $x,y\in B(0,1)$ and $t>0$.
\end{lemma}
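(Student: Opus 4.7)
The plan is to combine the half-space domination $B(0,1) \subset H_x$ with the Chapman--Kolmogorov identity for $k_1$, in order to extract both boundary factors simultaneously. A direct application of domain monotonicity alone only gives $k_1(t,x,y) \leq k_{H_x}(t,x,y) \lesssim \left(1\wedge\frac{1-|x|}{t}\right)k(t,x,y)$, by using $\delta_{H_x}(x)=1-|x|$, the crude bound $\delta_{H_x}(z)\leq 2$ for $z\in B(0,1)$ (because $\partial H_x$ is tangent to $S(0,1)$ and $\diam B(0,1)=2$), and the half-space estimate \eqref{eq:H:estimates}. The analogous argument with $H_y$ gives the twin bound in terms of $1-|y|$, but neither one-sided bound alone produces the desired product.

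To symmetrize, I would apply Chapman--Kolmogorov to $k_1$ and then dominate each of the two kernels by a \emph{different} half-space kernel:
\begin{equation*}
k_1(t,x,y) = \int_{B(0,1)} k_1(t/2,x,z)\,k_1(t/2,z,y)\,dz \leq \int_{B(0,1)} k_{H_x}(t/2,x,z)\,k_{H_y}(t/2,z,y)\,dz.
\end{equation*}
Feeding \eqref{eq:H:estimates} into each factor, with $\delta_{H_x}(x)=1-|x|$, $\delta_{H_y}(y)=1-|y|$, and the uniform bound $\delta_{H_x}(z),\delta_{H_y}(z)\leq 2$ for $z\in B(0,1)$, I obtain $k_{H_x}(t/2,x,z)\lesssim\left(1\wedge\frac{1-|x|}{t}\right)k(t/2,x,z)$ and symmetrically for the $y$-side. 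Since these prefactors are independent of $z$, they pull out of the integral.

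The remaining integral is dominated by its extension to $\R^n$, and the Chapman--Kolmogorov identity for the free Gaussian kernel gives $\int_{\R^n}k(t/2,x,z)k(t/2,z,y)\,dz = k(t,x,y)$, which completes the bound. There is no genuine obstacle here: the lemma is essentially a symmetrization of the one-sided half-space estimate via the semigroup property of $k_1$, with the diameter bound $\delta_{H_\cdot}(z)\leq 2$ and the elementary rescaling $1\wedge(ca)\leq c(1\wedge a)$ for $c\geq 1$ absorbed into the implicit $\lesssim$-constant. The degenerate case $x=0$ (where $H_x$ is not defined) is trivial, since then $(1\wedge\frac{1-|x|}{t})\asymp 1$ for small $t$ and the one-sided bound in $y$ already suffices.
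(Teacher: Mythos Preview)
Your proof is correct and follows essentially the same approach as the paper: domain monotonicity $B(0,1)\subset H_x$ (resp.\ $H_y$) combined with the half-space estimate \eqref{eq:H:estimates}, the crude bound $\delta_{H_\cdot}(z)\leq 2$, and the Chapman--Kolmogorov identity to symmetrize. The only cosmetic difference is the order of operations: the paper first records the one-sided bound $k_1(t,x,y)\lesssim(1\wedge\frac{1-|y|}{t})k(t,x,y)$ and then applies it twice (via the symmetry of $k_1$) inside Chapman--Kolmogorov, whereas you split via Chapman--Kolmogorov first and dominate each factor by its own half-space kernel. One minor remark: your handling of the degenerate case $x=0$ by restricting to ``small $t$'' does not quite cover the full range $t>0$ claimed in the lemma; it is cleaner to note that for $x=0$ one may take $H_x$ to be \emph{any} half-space tangent to $S(0,1)$, in which case $\delta_{H_x}(0)=1=1-|0|$ and the argument goes through unchanged.
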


\begin{proof}
  Since $B(0,1)\subset H_y$, we can just write $ \kkt\leq \kernel_{H_y}(t,x,y)$ and consequently
	\begin{eqnarray*}
	  \kkt\lesssim \left(1\wedge \frac{\delta_{H_y}(x)\delta_{H_y}(y)}{t}\right)\kt \lesssim \left(1\wedge\frac{1-|y|}{t}\right)\kt\/,
	\end{eqnarray*}
	by using \eqref{eq:H:estimates} together with $\delta_{H_y}(y)=\del{y}=1-|y|$ and a simple estimate $\delta_{H_y}(x)\leq 2$. Thus, using the Chapmann-Kolmogorov equation and the symmetry of $\kkt$ we arrive at
	\begin{eqnarray*}
	   \kk(2t,x,y) &=& \int_{B(0,1)}\kk(t,x,z)\kk(t,z,y)dz\\
		&\lesssim& \int_{B(0,1)}\left(1\wedge\frac{1-|x|}{t}\right)\kernel(t,x,z)\left(1\wedge\frac{1-|y|}{t}\right)\kernel(t,z,y)dz\\
		&\lesssim& \left(1\wedge\frac{1-|x|}{t}\right)\left(1\wedge\frac{1-|y|}{t}\right)\int_{\R^n}\kernel(t,x,z)\kernel(t,z,y)dz\\
		&=& \left(1\wedge\frac{1-|x|}{t}\right)\left(1\wedge\frac{1-|y|}{t}\right)\kernel(2t,x,y)\/.
	\end{eqnarray*}
	This ends the proof.
\end{proof}
Note that these bounds are optimal for small $t$ if additionally one of the space variables are bounded away from the boundary or $x$ and $y$ are bounded away from each other, i.e. we have
\begin{corollary}
\label{cor:boundedaway}
For a fixed $\varepsilon>0$ we have
\begin{eqnarray*}
 {\kkt}\lesssim h(t,x,y)\kt
\end{eqnarray*}
whenever $1-|x|\geq \varepsilon$ or $|x-y|\geq \varepsilon$.
\end{corollary}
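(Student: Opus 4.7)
The plan is to combine the upper bound from the preceding lemma with a purely algebraic comparison of
\[
f(t,x,y) := \left(1\wedge\tfrac{\del{x}}{t}\right)\left(1\wedge\tfrac{\del{y}}{t}\right)
\]
and $h(t,x,y)$. Since the lemma gives $\kkt\lesssim f(t,x,y)\kt$ for all $t>0$, the corollary reduces to proving $f\lesssim h$ in each of the two regimes. Introduce the abbreviations $a=\del{x}$, $b=\del{y}$, $r=|x-y|$, and write $h=A+BC$ with $A=1\wedge(ab/t)$, $B=1\wedge(ar^2/t)$, $C=1\wedge(br^2/t)$.

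For the regime $r\geq\varepsilon$, the $BC$ summand alone suffices. After replacing $\varepsilon$ by $\varepsilon\wedge 1$ one has $1\wedge(\varepsilon^2 s)\geq\varepsilon^2(1\wedge s)$ for every $s\geq 0$; applied with $s=a/t$ and with $s=b/t$ this yields $B\geq\varepsilon^2(1\wedge a/t)$ and $C\geq\varepsilon^2(1\wedge b/t)$, whence $BC\geq\varepsilon^4 f$ and $f\lesssim h$.

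For the regime $a\geq\varepsilon$, only the $A$ summand is needed. If $ab/t\geq 1$ then $A=1\geq f$ and we are done. Otherwise $ab<t$, and a short case analysis according to whether each of $a,b$ is below or above $t$ shows that $f/A$ equals $1/t$, $1/a$, $1/b$, or $t/(ab)$. Each of these is bounded by $1/\varepsilon$: whenever $a\leq t$ the hypothesis $a\geq\varepsilon$ forces $t\geq\varepsilon$, which handles $1/t$ (case $a,b\leq t$) and $1/b$ (case $a\leq t<b$, using $b>t\geq\varepsilon$); the bound $1/a\leq 1/\varepsilon$ is immediate (case $b\leq t<a$); and $t/(ab)\leq t/(\varepsilon b)<1/\varepsilon$ since $b>t$ (case $a,b>t$). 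Hence $f\lesssim A\leq h$.

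There is no substantive obstacle; the argument is a routine case analysis. The content of the corollary is simply that, outside the genuinely singular two-parameter regime in which both points approach the sphere while being close to each other, the crude Davies-type bound of the previous lemma already coincides with $h(t,x,y)\kt$ up to a constant depending on $\varepsilon$ and the dimension, so no refinement via the $BC$ term (or via $A$) is necessary.
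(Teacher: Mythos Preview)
Your proof is correct and follows essentially the same approach as the paper: use the preceding lemma to reduce to showing $f\lesssim h$, then bound $f$ by the $BC$ summand when $|x-y|\geq\varepsilon$ and by the $A$ summand when $\del{x}\geq\varepsilon$. The only difference is cosmetic: in the second regime the paper compresses your four-case analysis into the single chain $(1\wedge a/t)(1\wedge b/t)\leq(1\wedge b/t)\lesssim(1\wedge ab/t)$, the last step following immediately from $a\geq\varepsilon$.
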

\begin{proof}
 Indeed, if $|x-y|\geq \varepsilon$ we just simply have
\begin{eqnarray*}
  \left(1\wedge \frac{1-|x|}{t}\right)\left(1\wedge \frac{1-|y|}{t}\right)\lesssim \left(1\wedge \frac{(1-|x|)|x-y|^2}{t}\right)\left(1\wedge \frac{(1-|y|)|x-y|^2}{t}\right)
\end{eqnarray*}
and the last expression is apparently dominated by $h(t,x,y)$. Similarly, for $1-|x|\geq \varepsilon$ we have
\begin{eqnarray*}
  \left(1\wedge \frac{1-|x|}{t}\right)\left(1\wedge \frac{1-|y|}{t}\right) \leq \left(1\wedge \frac{1-|y|}{t}\right)
	\lesssim \left(1\wedge \frac{(1-|x|)(1-|y|)}{t}\right)\/,
\end{eqnarray*}
which is smaller than $h(t,x,y)$.
\end{proof}

Note that if the angle $\anxy$ is greater than or equal to $\pi/4$, then we are in the case covered by Corollary \ref{cor:boundedaway}, i.e. if $x$ and $y$ are close to each other and both close to the boundary, then $\anxy$ must be small. Thus, it is enough to prove the upper-bounds with additional assumption that the angle $\anxy$ is smaller than $\pi/4$.

\begin{proposition}
  \label{prop:upper:smallangle}
	There exists a constant $T>0$ such that
	\begin{eqnarray*}
	   \kkt\lesssim h(t,x,y)\kt\/,
	\end{eqnarray*}
	for every $x,y\in B(0,1)$ such that $\anxy<\pi/4$ and $t<T$.
\end{proposition}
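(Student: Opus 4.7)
By Corollary \ref{cor:boundedaway}, the desired bound is already known whenever $1-|x|\geq\eps_0$ or $|x-y|\geq\eps_0$. So I may add the standing assumption that $1-|x|,1-|y|,|x-y|<\eps_0$ for a suitably small $\eps_0$, on top of $\anxy<\pi/4$. Choose coordinates so that $x=(|x|,0,\ldots,0)$ and $y=(y_1,y_\perp,0,\ldots,0)$ with $y_\perp\geq0$; the angle hypothesis then yields $y_1\geq|y|/\sqrt2$, so $x$ and $y$ lie near a common boundary point.

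\textbf{One-sided tangent bound.} Since $B(0,1)\subset H_x$, the estimate \eqref{eq:H:estimates} gives
\[
\kkt\leq k_{H_x}(t,x,y)\lesssim\Bigl(1\wedge\tfrac{(1-|x|)\delta_{H_x}(y)}{t}\Bigr)\kt.
\]
A direct computation yields $\delta_{H_x}(y)=1-|y|\cos\anxy\leq(1-|y|)+|y|(1-\cos\anxy)\lesssim(1-|y|)+|x-y|^{2}$, using $\anxy\lesssim|x-y|$ (valid since $|x|,|y|\geq1/2$). Writing
\[
A=1\wedge\tfrac{(1-|x|)(1-|y|)}{t},\quad B_x=1\wedge\tfrac{(1-|x|)|x-y|^{2}}{t},\quad B_y=1\wedge\tfrac{(1-|y|)|x-y|^{2}}{t},
\]
and using $1\wedge(a+b)\leq(1\wedge a)+(1\wedge b)$, I obtain $\kkt\lesssim(A+B_x)\kt$, and symmetrically $\kkt\lesssim(A+B_y)\kt$.

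\textbf{Coupling both tangent half-spaces.} These two bounds together yield only $\kkt\lesssim(A+\min(B_x,B_y))\kt$, which is strictly weaker than the claim, since $B_xB_y$ can be much smaller than either $B_x$ or $B_y$. To recover the product I apply Chapman--Kolmogorov and dominate each factor by the appropriate half-space kernel:
\[
\kkt=\int_{B(0,1)}\!\kk(t/2,x,z)\kk(t/2,z,y)\,dz\leq\int k_{H_x}(t/2,x,z)k_{H_y}(t/2,z,y)\,dz.
\]
Rewriting the Gaussian product as $k(t/2,x,z)k(t/2,z,y)=\kt\cdot(\pi t)^{-n/2}e^{-|z-m|^{2}/t}$ with $m=(x+y)/2$, and inserting the half-space estimate on each factor, the proof reduces to showing
\[
\ex\!\Bigl[\min\!\bigl(1,\tfrac{2(1-|x|)\delta_{H_x}(Z)}{t}\bigr)\min\!\bigl(1,\tfrac{2(1-|y|)\delta_{H_y}(Z)}{t}\bigr)\Bigr]\lesssim A+B_xB_y,
\]
for $Z\sim\mathcal{N}(m,(t/2)I)$.

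\textbf{Main obstacle.} The crux is the last display. The quantities $\delta_{H_x}(Z),\delta_{H_y}(Z)$ are linear functionals of $Z$ with Gaussian fluctuations of order $\sqrt t$ and means satisfying $\delta_{H_x}(m),\delta_{H_y}(m)\lesssim(1-|x|)+(1-|y|)+|x-y|^{2}$ (from $\delta_{H_x}(m)=\tfrac12[(1-|x|)+\delta_{H_x}(y)]$ and the previous angle bound). One must split the expectation by whether each factor is saturated at $1$ or realised by its linear argument: the doubly linear region contributes $\sim(1-|x|)(1-|y|)\ex[\delta_{H_x}(Z)\delta_{H_y}(Z)]/t^{2}$, which a second-moment calculation identifies with $B_xB_y$ in the non-trivial range, while the regions where one or both minima saturate produce at most $\lesssim A$. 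Carrying out this case analysis uniformly, despite the strong correlation between $\delta_{H_x}(Z)$ and $\delta_{H_y}(Z)$ caused by small $\anxy$ (they are nearly the same linear functional), is the principal technical difficulty.
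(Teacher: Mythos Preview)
Your route is genuinely different from the paper's, and it stops exactly where the real work begins. The paper never uses Chapman--Kolmogorov in this proposition. It bounds $k_1\le k_{H_x\cap H_y}$ and splits into two geometric cases. When $x\in B\bigl(\tfrac{y}{2|y|},\tfrac12\bigr)$ one checks directly that $\delta_1(x)\approx\delta_{H_y}(x)$, so the single half-space estimate already delivers the term $A=1\wedge\frac{(1-|x|)(1-|y|)}{t}$. In the complementary case the paper proves, by reflecting the killed process $W^{H_y}$ through the hyperplane $\partial H_x$ and using the strong Markov property, that
\[
k_{H_x\cap H_y}(t,x,y)\le k_{H_y}(t,x,y)-k_{H_y}(t,\bar x,y)
= k(t,x,y)-k(t,\bar x,y)-k(t,x,\bar y)+k(t,\bar x,\bar y).
\]
Each of the three subtracted/added terms is an explicit exponential multiple of $k(t,x,y)$ involving $1-|x|$, $1-|y|$ and $1-\cos\anxy$; an elementary algebraic regrouping of these four terms, together with $1-\cos\anxy\approx|x-y|^2$ (which uses precisely the hypothesis $x\notin B(\tfrac{y}{2|y|},\tfrac12)$) and $1-e^{-u}\le 1\wedge u$, yields $A+B_xB_y$ directly. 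No Gaussian moment computation is needed.

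Your Chapman--Kolmogorov scheme is plausible, but the final displayed expectation bound is the entire content of the proposition and you have not proved it. Two concrete issues. First, your attribution of terms is off: the doubly linear contribution $(1-|x|)(1-|y|)\,\ex[\delta_{H_x}(Z)\delta_{H_y}(Z)]/t^{2}$ contains a covariance piece of size $\tfrac{t}{2}\cos\anxy\approx t/2$, which after the prefactor already produces $\approx A$, not $B_xB_y$; it is the product of the means $\delta_{H_x}(m)\delta_{H_y}(m)\approx\bigl((1-|x|)+(1-|y|)+|x-y|^2\bigr)^2$ that has to be identified with $B_xB_y$ (and this only works after one also controls the cross terms by $A$). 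Second, and more seriously, the regions where one or both minima saturate---which you yourself flag as ``the principal technical difficulty''---are not treated at all. Because $\cos\anxy$ can be arbitrarily close to $1$, the two linear functionals $\delta_{H_x}(Z)$ and $\delta_{H_y}(Z)$ are nearly identical, and the saturation events for the two factors are nearly the same event; handling this uniformly requires a careful tail/moment argument that you have not supplied. As written, the proposal is an outline rather than a proof.
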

\begin{proof}
  Without loss of generality we can assume that 
	\begin{eqnarray*}
  x&=&(x_1,x_2,0\ldots,0)\/,\\
	y&=&(y_1,0\ldots,0)\/,\quad y_1\in[0,1)\/,
\end{eqnarray*}
	and $\del{x}\geq \del{y}$. Since simply $B(0,1)\subset H_x\cap H_y$, we have $\kkt\leq \kernel_{H_x\cap H_y}(t,x,y)$. Moreover, it is clear that
	\begin{eqnarray*}
	   \kernel_{H_x\cap H_y}(t,x,y) = \kernel_{H_{(x_1,x_2)}\cap H_{(y_1,0)}}(t,(x_1,x_2),(y_1,0))\frac{1}{(4\pi t)^{n/2-1}}\exp\left(-\frac{1}{4t}\sum_{k=3}^n (x_k-y_k)^2\right)\/.
	\end{eqnarray*}
	It means that it is enough to consider $2$-dimensional case. Thus, from now on, we will assume that $n=2$, $x=(x_1,x_2)$ and $y=(y_1,0)$.
	
	The proof is divided into two parts. The first one relates to the case when $x\in B(y/(2|y|),1/2)$, i.e. $(x_1-1/2)^2+x_2^2<1/4$. Then $\del{x}\approx \delta_{H_y}(x)$. Indeed, since $y=(y_1,0)$, we have $H_y=H=\{x:\R^2: x_1<1\}$ and consequently $\delta_{H_y}(x)=1-x_1$. The inequality $\del{x}\leq \delta_{H}(x)$ is clear but it is also easy to see that
	\begin{eqnarray*}
	   \delta_{H}(x)= 1-x_1 = 1-x_1^2-\left(\frac14 -\left(x_1-\frac12\right)^2\right)<1-x_1^2-x_2^2 \leq 2(1-|x|)=2\del{x}\/.
	\end{eqnarray*}
Thus, using \eqref{eq:H:estimates}, we obtain
\begin{eqnarray*}
   \kernel_{H_x\cap H}(t,x,y) &\leq& \kernel_{H}(t,x,y)\approx \left(1\wedge\frac{\delta_{H}(x)\delta_{H}(y)}{t}\right)\kt\\
	  &\approx& \left(1\wedge\frac{(1-|x|)(1-|y|)}{t}\right)\kt\leq h(t,x,y)\kt\/.
\end{eqnarray*}	

Now we consider the remaining case, i.e. $x\in B(0,1)\cap [B(y/(2|y|),1/2)]^c$. Since $x$ is outside the ball $B(y/(2|y|),1/2)$, the inversion $x\to x/|x|^2$ transforms the set $B(y/(2|y|),1/2)$ into $H$ and 
\begin{eqnarray*}
   |\bar{x}| = {2-|x|}<\frac{1}{|x|}=\left|\frac{x}{|x|^2}\right|\/,
\end{eqnarray*}
we get that $\bar{x}$ remains inside $H$. Moreover, we have
\begin{eqnarray}
\label{eq:4p:first}
   \kernel_{H_x\cap H}(t,x,y)\leq \kernel_{H}(t,x,y)-\kernel_{H}(t,\bar{x},y).
\end{eqnarray}
To see this, let $A$ be any Borel subset of $B(y/(2|y|)$  and since $\tau_{H_x\cap H}=\tau_{H_x}\wedge \tau_H$ we can write
\begin{eqnarray*}
   \int_A k_{H_x\cap H}(t,x,z)dz &=& \ex^x[t<\tau_{H},W(t)\in A]-\ex^x[\tau_{H_x}<t<\tau_{H}, W(t)\in A]\/.
\end{eqnarray*}
Denoting by $\tau_{H_x}^H$ the first exit time from $H_x$ by the killed process $W^{H}$ we can rewrite the last expression using the strong Markov property in the following way
\begin{eqnarray}
\nonumber
 \ex^x[\tau_{H_x}<t<\tau_{H}, W(t)\in A] &=& \ex^x[\tau_{H_x}^H<t, W^H(t)\in A]\\
\label{eq:4p:proof}
&=& \ex^x\left[\tau_{H_x}^H<t,\ex^{W^H(\tau_{H_x}^H)}\left[W^H(t-\tau_{H_x}^H)\in A\right]\right]\/.
\end{eqnarray}
To make the following computation more transparent we write $P(x)=P_{\partial H_x}(x)=\bar{x}$, i.e. $P$ is the reflection with respect to the hyperplane $\partial H_x$. $P(W)$ is again a Brownian motion, clearly $P(x)=\bar{x}$, $P(H_x)=\textrm{int}(H_x^c)$ and $P(z)=z$ for $z\in \partial H_x$. Moreover, due to the continuity of the paths the first exit times from $H_x^c$ and $\textrm{int}(H_x^c)$ are equal a.s. and we will omit ``$\textrm{int}$'' in the notation. Consequently, for a Borel set $B\in \partial H_x^c$ we have $P(B)=B$. Thus, for every Borel set $I\in (0,t)$ we have
\begin{eqnarray*}
  \ex^{\bar{x}}[\tau_{H_x^c}^H\in I; W^H(\tau_{H_x^c})\lefteqn{\in B] =\ex^{\bar{x}}[\tau_{H_x^c}^H\in I,t<\tau_{H}; W(\tau_{H_x^c})\in B] }
	\\&=&  \ex^{P(x)}[\tau_{P(H_x)}^H\in I,t<\tau_{P(P(H))}; P(W)(\tau_{P(H_x)})\in P(B)]\\
	&=& \ex^{x}[\tau_{H_x}^H\in I, t<\tau_{P(H)}; W(\tau_{H_x})\in B]\/.
\end{eqnarray*}
Moreover, since $\anxy<\pi/4$, then (by simple geometry) $\tau_{P(H)}<\tau_{H}$ on $\{\tau_{H_x}<t\}$ and
\begin{eqnarray*}
 \ex^{\bar{x}}[\tau_{H_x^c}^H\in I; W^H(\tau_{H_x^c})\in B]\leq \ex^{x}[ \tau_{H_x}^H\in I; W^H(\tau_{H_x})\in B]\/.
\end{eqnarray*}
Thus, the last expression in \eqref{eq:4p:proof} is bounded from below by
\begin{eqnarray*}
  \ex^{\bar{x}}\left[\tau_{H_x^c}^H<t,\ex^{W^H(\tau_{H_x^c}^H)}\left[W^H(t-\tau_{H_x^c}^H)\in A\right]\right]\/,
\end{eqnarray*} 
which by the strong Markov property is equal to 
\begin{eqnarray*}
  \ex^{\bar{x}}\left[\tau_{H_x^c}<t<\tau_H, W(t)\in A\right] = \ex^{\bar{x}}\left[t<\tau_H, W(t)\in A\right]\/.
\end{eqnarray*}
Note that since $\bar{x}\in H_x^c$ and $A\subset H_x$, we could remove the condition $\tau_{H_x^c}<t$. Combining all together we arrive at \eqref{eq:4p:first} and thus we have
\begin{eqnarray*}
   \kkt\leq \kt-\kernel(t,\bar{x},y)-\kernel(t,x,\bar{y})+\kernel(t,\bar{x},\bar{y})\/.
\end{eqnarray*}
Since $\angle \bar{x}0\bar{y}=\anxy$ we can find that
\begin{eqnarray*}
   |x-\bar{y}|^2 &=& |x-y|^2+4(1-|y|)(1-|x|\cos\anxy)\/,\\ 
   |\bar{x}-\bar{y}|^2&=& |x-y|^2+4(1-\cos\anxy)((1-|x|)+(1-|y|))\/,
\end{eqnarray*}
which directly lead to
\begin{eqnarray*}
  \kernel(t,x,\bar{y}) &=& \exp\left[-\frac{(1-|y|)(1-|x|\cos\anxy))}{t}\right]\kt\/,\\
  \kernel(t,\bar{x},\bar{y}) &=& \exp\left[-\frac{(1-\cos\anxy)((1-|x|)+(1-|y|))}{t}\right]\kt\/.
\end{eqnarray*}
We can also rewrite $\kt-\kernel(t,\bar{x},y)-\kernel(t,x,\bar{y})$ as
\begin{eqnarray*}
   \left(1-\exp\left[\frac{(1-|x|)(1-|y|\cos\anxy)}{t}\right]-\exp\left[\frac{(1-|y|)(1-|x|\cos\anxy)}{t}\right]\right)\kt
\end{eqnarray*}
and consequently we get the upper bounds for $\frac{\kkt}{\kt}$ as a sum of two components
\begin{eqnarray}
\label{eq:firstcomp}
\left(1-\exp\left[\frac{(1-|x|)(1-|y|\cos\anxy)}{t}\right]\right)\left(1-\exp\left[\frac{(1-|y|)(1-|x|\cos\anxy)}{t}\right]\right)
\end{eqnarray}
and
\begin{eqnarray*}
   \exp\left[-\frac{(1-\cos\anxy)((1-|x|)+(1-|y|))}{t}\right]\left(1-\exp\left[-\frac{2\cos\anxy(1-|x|)(1-|y|)}{t}\right]\right)\/.
\end{eqnarray*}
It is clear that the last expression can be bounded by 
\begin{eqnarray*}
   1\wedge \frac{(1-|x|)(1-|y|)}{t}\/.
\end{eqnarray*}
To deal with the first one note that, by simple geometry, $|x|\sin\anxy\leq |x-y|\leq|y|\tan\anxy$. By our assumptions on $\anxy$ and $|x|$ we get $\sin \anxy\approx |x-y|$ in the considered region. Moreover, since $x$ is outside the ball $B(y/(2|y|),1/2)$ we have $|x|^2\geq x_1$, which simply gives us that $\cos\anxy = \frac{x_1y_1}{|x||y|}=\frac{x_1}{|x|}\leq |x|\leq |y|$. Consequently
\begin{eqnarray*}
   1-\cos\anxy = \frac{\sin^2\anxy}{1+\cos\anxy}\approx |x-y|^2
\end{eqnarray*}
and 
\begin{eqnarray*}
   1-|x|\cos\anxy &=& 1-|x|+|x|(1-\cos\anxy)\approx 1-\cos\anxy\approx |x-y|^2\/.
\end{eqnarray*}
In the similar way we obtain $1-|y|\cos\anxy\approx |x-y|^2$. Combining all together we get the desired bounds for \eqref{eq:firstcomp} and the proof is complete.
\end{proof}

Now we can use the upper bounds from Theorem \ref{thm:main} together with the relation \eqref{eq:hx:diff} to prove the upper bounds in Corollary \ref{cor:hxtz:estimates}. However, since this result will be used in the next section to get the lower bounds of the considered heat kernel $\kkt$, we formulate it in a separate corollary.
\begin{corollary}
\label{cor:hxtz:upper}
For every $T>0$ we have
\begin{eqnarray}
\label{eq:hxtz:upper}
q_x(t,z)\lesssim \left(\frac{1-|x|}{t}+\frac{|x-z|^2}{t}\left(1\wedge \frac{(1-|x|)|x-z|^2}{t}\right)\right)\kernel(t,x,z)
\end{eqnarray}
whenever $|x|<1$, $|z|=1$ and $t<T$.
\end{corollary}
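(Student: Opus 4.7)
The plan is to deduce \eqref{eq:hxtz:upper} by combining the normal-derivative identity \eqref{eq:hx:diff} with the upper bound in Theorem \ref{thm:main}. Since $\kkt$ vanishes whenever $|y|=1$, the normal derivative at a boundary point $z$ can be expressed as
\[
q_x(t,z)\;=\;\lim_{r\to 0^+}\frac{k_1(t,x,(1-r)z)}{r},
\]
because the inward radial direction at $z\in S(0,1)$ coincides with $-z$, so $|(1-r)z|=1-r$ and $\del{(1-r)z}=r$. The difference quotient has no $k_1(t,x,z)$ term precisely thanks to the Dirichlet condition.

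Next I would apply the upper bound of Theorem \ref{thm:main} to the numerator, setting $y_r:=(1-r)z$ and using the equivalence $t^{-n/2}\exp(-|x-y|^2/(4t))\approx k(t,x,y)$, to obtain
\[
\frac{k_1(t,x,y_r)}{r}\;\lesssim\;\frac{h(t,x,y_r)}{r}\,k(t,x,y_r).
\]
Continuity of the Gauss kernel gives $k(t,x,y_r)\to k(t,x,z)$, so the remaining task is to identify $\lim_{r\to 0^+} h(t,x,y_r)/r$. The first summand $1\wedge(1-|x|)r/t$ of $h(t,x,y_r)$ reduces for small $r$ to $(1-|x|)r/t$ and contributes $(1-|x|)/t$ after division by $r$. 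In the product term, the factor $1\wedge r|x-y_r|^2/t$ similarly reduces to $r|x-y_r|^2/t$ and contributes $|x-z|^2/t$, while $1\wedge (1-|x|)|x-y_r|^2/t$ depends continuously on $r$ and tends to $1\wedge (1-|x|)|x-z|^2/t$. Combining these pieces produces precisely the right-hand side of \eqref{eq:hxtz:upper}.

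There is no serious obstacle here; the corollary is an immediate analytic consequence of Theorem \ref{thm:main} together with \eqref{eq:hx:diff}. The only care required is the elementary limit analysis for $h(t,x,y_r)/r$ outlined above — the genuine work of the paper has already been carried out in proving the upper bound of Theorem \ref{thm:main} itself, which justifies the pointwise domination we pass to the limit in.
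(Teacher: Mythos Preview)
Your proposal is correct and matches the paper's own argument: the paper states just before Corollary~\ref{cor:hxtz:upper} that one uses the already-established upper bounds of Theorem~\ref{thm:main} together with the relation \eqref{eq:hx:diff}, and earlier (before Corollary~\ref{cor:hxtz:estimates}) it spells out that this amounts to dividing the estimate \eqref{eq:main} by $1-|y|$ and letting $y\to z\in S(0,1)$, which is exactly the limit computation you carry out with $y_r=(1-r)z$.
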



\section{Lower bounds}
It is well-known that whenever the space variables $x$ and $y$ are bounded away from the boundary, the heat kernel is comparable with the Gaussian kernel. We start the proof of the lower bounds with more general result which also ensures comparability between $\kkt$ and $\kt$, but here we only assume that $x$ and $y$ are not to close to the boundary in comparison to the time variable $t$.
\begin{proposition}
\label{prop:lower:rectangle}
For every $C_1>0$ there exists $C_2=C_2(C_1,n)$ such that
\begin{eqnarray*}
  \kkt\geq C_2 \kt
\end{eqnarray*}
for every $x,y$ such that $\del{x}\geq C_1\sqrt{t}$, $\del{y}\geq C_1\sqrt{t}$.
\end{proposition}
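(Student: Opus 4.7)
The plan is to use the probabilistic representation of $k_1$ as a Brownian-bridge survival probability, together with the convexity of $B(0,1)$. First, I would start from the identity
\[
k_1(t,x,y) \;=\; k(t,x,y)\,\pr^x(\tau_1 > t \mid W_t = y),
\]
which reduces the proposition to bounding the conditional survival probability below by a constant $C_2(C_1,n) > 0$. Under this conditional law, $(W_s)_{s \in [0,t]}$ is a Brownian bridge from $x$ to $y$ of length $t$ and admits the standard decomposition $W_s = (1-s/t)x + (s/t)y + B^{\mathrm{br}}_s$, where $B^{\mathrm{br}}$ is a standard $n$-dimensional Brownian bridge from $0$ to $0$ of length $t$.

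Next I would invoke convexity: the hypothesis $\del{x}, \del{y} \geq C_1\sqrt{t}$ reads $|x|, |y| \leq 1 - C_1\sqrt{t}$, so by convexity of $B(0,1)$,
\[
\bigl|(1-s/t)\,x + (s/t)\,y\bigr| \;\leq\; 1 - C_1\sqrt{t}, \qquad s \in [0,t].
\]
Consequently, on the event $\{\sup_{0 \leq s \leq t} |B^{\mathrm{br}}_s| < C_1\sqrt{t}\}$ the whole path $W$ stays strictly inside $B(0,1)$, so $\tau_1 > t$. In other words, the conditional survival probability is bounded below by $\pr(\sup_{s \in [0,t]} |B^{\mathrm{br}}_s| < C_1\sqrt{t})$, and one gains a hypothesis-dependent but universal margin thanks to the straight-line interpolation lying in a strictly smaller ball.

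Finally, by Brownian scaling, $s \mapsto B^{\mathrm{br}}_{st}/\sqrt{t}$ ($s \in [0,1]$) is a standard $n$-dimensional Brownian bridge on the unit interval, so the last probability equals $\pr(\sup_{s \in [0,1]} |\hat B_s| < C_1) =: C_2(C_1,n)$, which is strictly positive (in $\R^n$ this can be reduced by independence of coordinates to the 1-dimensional small-ball probability for a standard bridge, which is classical and positive). This yields $k_1(t,x,y) \geq C_2\, k(t,x,y)$ as required. The only non-mechanical step is the invocation of the Brownian-bridge representation together with the positivity of the small-ball bridge probability; the convexity-based inclusion of events and the scaling identity are elementary, so no estimate akin to Lemma \ref{lemma:kt:lower} is actually needed here.
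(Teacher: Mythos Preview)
Your argument is correct and takes a genuinely different route from the paper's own proof. The paper proceeds by inscribing a suitably rotated cuboid $K\subset B(0,1)$ containing both $x$ and $y$ with $\delta_K(x),\delta_K(y)\gtrsim\sqrt{t}$, invokes the product structure $\kernel_K=\prod_i \kernel_{(a_i,b_i)}$, and then appeals to the one-dimensional interval estimate \eqref{eq:estimates:d1} to conclude $\kernel_K\gtrsim \kernel$; the geometric claim that such a cuboid always exists is stated but not argued in detail. Your approach bypasses both the inscribed-cuboid construction and the one-dimensional input entirely: the Brownian-bridge decomposition together with the convexity of the smaller ball $B(0,1-C_1\sqrt t)$ reduces everything to the scale-invariant small-ball probability $\pr(\sup_{[0,1]}|\hat B|<C_1)>0$. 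This is shorter, fully self-contained, and extends verbatim to any convex domain (since $\delta_D$ is concave on a convex $D$). The paper's route has the minor virtue of staying within the analytic framework already set up via \eqref{eq:estimates:d1}, but your bridge argument is the cleaner proof of this particular proposition.
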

\begin{proof}
We begin with considering the cuboid of the form
\formula{
  K = (a_1,b_1)\times (a_2,b_2)\times\ldots\times (a_n,b_n)\/.
}
It is obvious that $\kernel_K(t,x,y)$ is a product of the kernels $\kernel_{(a_i,b_i)}(t,x_i,y_i)$ and in particular, if $\delta_K(x)$ and $\delta_K(y)$ are bounded from below by $c_1 \sqrt{t}$, we get (see \eqref{eq:estimates:d1})
\formula{
  \kernel_K(t,x,y)\geq c_2 \kt\/.
}
Moreover, due to the rotational invariance the same statement is true for every cuboid. To finish the proof it is enough to notice that since $\del{x}$ and $\del{y}$ are greater then $C_1\sqrt{t}$, there exists a cuboid $K$ included in the ball such that $x,y\in K$ and $\delta_K(x), \delta_K(y)\geq c_3\sqrt{t}$ for some positive $c_3$ depending on $C_1$ and $n$. Then, we just can write
	\begin{eqnarray*}
	   \kkt\geq \kernel_K(t,x,y)\geq c_2 \kt\/.
	\end{eqnarray*}
	and the proof is complete.
\end{proof}

The crucial step in the proof of the lower bounds are the estimates when $x$ and $y$ are in a small ball tangent to the sphere $S(0,1)$. In fact we narrow our considerations to $\del{y}<1/16$ and $x\in B(\frac{15}{16}\frac{y}{|y|},\frac1{16})$. Note that in this case, we have
\begin{eqnarray*}
|x|^2<1-|x-\frac{y}{|y|}|^2<1-|x-y|^2\/,
\end{eqnarray*}
where the last inequality holds if $\del{x}\geq \del{y}$. Consequently
\begin{eqnarray*}
   \del{x}=1-|x|> \frac{|x-y|^2}{1+|x|}> \frac{1}{2}|x-y|^2\/. 
\end{eqnarray*}
and obviously the first component on the right-hand side of \eqref{eq:main} dominates the other. Thus, our next aim is to proof the following
\begin{proposition}
\label{prop:smallball}
There exist constants $C_3=C_3(n)>0$, $t_0=t_0(n)>0$ and $m=m(n)>0$ such that 
	\begin{eqnarray*}
	   \kkt\geq C_3\, \left(1\wedge \frac{(1-|x|)(1-|y|)}{t}\right)\kt\/,
	\end{eqnarray*}
for every $x,y\in B(0,1)$ such that $\del{y}<1/16$, $x\in B(\frac{15}{16}\frac{y}{|y|},\frac{1}{16})$, $\del{x}\geq m\sqrt{t}$ and $t<t_0$.
\end{proposition}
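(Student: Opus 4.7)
The plan is to combine Chapman--Kolmogorov, Proposition~\ref{prop:lower:rectangle}, and Hunt's formula~\eqref{eq:Hunt:ball}. The Chapman--Kolmogorov step reduces the desired lower bound on $k_1(t,x,y)$ to a lower bound on $k_1(t/2,z,y)$ for an intermediate point $z$ at distance of order $\sqrt t$ from the boundary; this reduced estimate is then handled directly via Hunt's formula together with the upper bound on the exit density from Corollary~\ref{cor:hxtz:upper}.

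Concretely, I set $z_0 = (x+y)/2$. By~\eqref{eq:parallel:low}, $\del{z_0} \geq \del{x}/4 \geq (m/4)\sqrt t$. Choosing a sufficiently small $r>0$ (depending on $m$), on the ball $A = B(z_0, r\sqrt t)$ one has $\del{z} \gtrsim \sqrt t$ for every $z \in A$. Chapman--Kolmogorov gives
\[
k_1(t,x,y) \geq \int_A k_1(t/2, x, z)\, k_1(t/2, z, y)\, dz,
\]
and Proposition~\ref{prop:lower:rectangle} yields $k_1(t/2, x, z) \gtrsim k(t/2, x, z)$ for $z \in A$, since both $\del{x}$ and $\del{z}$ are of order at least $\sqrt t$. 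If one can establish the auxiliary estimate
\[
k_1(t/2, z, y) \gtrsim \Bigl(1 \wedge \tfrac{\del{z}\del{y}}{t}\Bigr) k(t/2, z, y)
\]
uniformly for $z \in A$, then Lemma~\ref{lemma:kt:lower} (applied with $a = z_0$, for which $|a - (x+y)/2| = 0$) reassembles the Gaussian factors into $k(t,x,y)$, and $\del{z} \approx \del{x}$ on $A$ produces the required bound $k_1(t,x,y) \gtrsim (1 \wedge \del{x}\del{y}/t)\, k(t,x,y)$.

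The core work is this auxiliary estimate, which I would prove directly via Hunt's formula,
\[
k_1(t/2, z, y) = k(t/2, z, y) - \int_0^{t/2}\!\!\int_{\partial B(0,1)} k(t/2 - s, w, y)\, q_z(s, w)\, ds\, d\sigma(w),
\]
inserting the upper bound on $q_z$ from Corollary~\ref{cor:hxtz:upper}. I would split the integration domain according to whether $w$ lies near the tangent point $\hat{y} = y/|y|$: in the far region the exponential $\exp(-|w-y|^2/(4(t/2-s)))$ gives strong suppression, so the contribution is negligible; in the near region the Gaussian product $k(t/2-s, w, y)\, k(s, z, w)$ collapses (by the Chapman--Kolmogorov identity) to $k(t/2, z, y)$ times an explicit weight, and the remaining $s$-integral can be computed and compared to the explicit reflection identity $k_{H_y}(t/2, z, y) = k(t/2, z, y) - k(t/2, z, \bar y)$ available for the tangent half-space.

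The main obstacle is the quantitative sharpness of this Hunt-integral estimate: the upper bound on $q_z$ is loose enough that a naive substitution only yields $I \leq k(t/2, z, y)$, whereas one needs a strict gap of order $\del{z}\del{y}/t \cdot k(t/2, z, y)$ between the hitting integral $I$ and $k(t/2, z, y)$. Extracting this gap requires exploiting both the positive curvature of $\partial B(0,1)$ and the fact that in the tangent-ball region the local geometry is faithfully modelled by the half-space at $\hat y$; the assumption $\del{z} \gtrsim \sqrt t$ ensures that curvature corrections stay subleading on the relevant scale.
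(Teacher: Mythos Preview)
Your Chapman--Kolmogorov step is a no-op: the ``auxiliary estimate'' you need for $z$ near $(x+y)/2$ is exactly the statement of the proposition itself (same small tangent ball, same hypothesis $\delta_1\gtrsim\sqrt t$), so nothing has been reduced. All the content lies in the Hunt-formula argument, which you leave as a sketch and whose central obstacle you yourself name without resolving.

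That obstacle is real and is not overcome by the route you describe. Writing $k_1=k-I$ and trying to show $I\le\bigl(1-c\,\delta_1(z)\delta_1(y)/t\bigr)\,k$ asks for an additive gap of arbitrarily small relative size (since $\delta_1(y)$ is unrestricted from below); the bound on $q_z$ from Corollary~\ref{cor:hxtz:upper} is sharp only up to multiplicative constants and cannot resolve that scale. You gesture at the half-space identity $k_{H_y}=k-k(\cdot,\bar y)$ as a target of comparison, but the paper's key move is to build it into the decomposition from the outset: one has exactly $k_1=k_{H_0}-R$ with
\[
R(t,x,y)\,dy=\mathbf{E}^x\bigl[\tau_B<t<\tau_{H_0};\,W(t)\in dy\bigr]
=\int_0^t\!\int_{\partial B}q_x(s,w)\,k_{H_0}(t-s,w,y)\,ds\,d\sigma(w),
\]
where $H_0$ is the tangent half-space at $y/|y|$. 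Since $k_{H_0}\approx(1\wedge\delta_B(x)\delta_B(y)/t)\,k$ already carries the boundary factor, the goal becomes the \emph{soft} bound $R\le c\,k_{H_0}$ for some fixed $c<1$. Crucially, the integrand now contains $k_{H_0}(t-s,w,y)$ rather than $k(t-s,w,y)$, which introduces the extra factor $\delta_{H_0}(w)\lesssim|w-y/|y||^2$; this is precisely the quadratic curvature gain you allude to but never actually insert. The paper then splits $\partial B$ into far and near pieces; the near piece is a genuine Gaussian computation in the tangential coordinates $\tilde w=(w_2,\dots,w_n)$ using $w_1\le|\tilde w|^2$, and the hypothesis $\delta_B(x)\ge m\sqrt t$ is what finally absorbs a residual factor $t/\delta_B(x)^2$. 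Without the half-space subtraction built into the decomposition, your Hunt integral lacks the $\delta_{H_0}(w)$ factor and the argument cannot close.
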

\begin{proof}
The best way to present the technical details of the proof and to make it more transparent and simpler to read is to consider the ball $B=B((1,0,\ldots,0),1)$ and set $y=(y_1,0,\ldots,0)$, where in general $y_1\in(0,1/16)$. Note that such a choice implies that $H_y=\{x\in\R^n: x_1>0\}$ and for simplicity we denote it by $\Hh$. Moreover, we set $x=(x_1,x_2,0,\ldots,0)$ and assume as previously that $\delB{x}\geq \delB{y}$. Our assumptions now reads as $x\in B((1/16,0,\ldots,0),1/16)$ and it implies that $x_1^2+x_2^2\leq x_1/8$. Thus 
\begin{eqnarray}
\label{eq:delB:x1}
   \delB{x} \approx 1-[(x_1-1)^2+x_2^2] = 2x_1-(x_1^2+x_2^2)\approx x_1\/.
\end{eqnarray}
Consequently, since $\delta_{\Hh}(x)=x_1$ and $\delta_{\Hh}(y)=y_1$ we have
\begin{eqnarray}
   \label{eq:p_H:est_recall}
   \kernel_{\Hh}(t,x,y)\approx  \left(1\wedge \frac{x_1y_1}{t}\right)\kt \approx \left(1\wedge \frac{\delB{x}\delB{y}}{t}\right)\kt\/.
\end{eqnarray}
Moreover, we have
\begin{eqnarray*}
   \ex^x(\tau_B<t;W(t)\in dy)=\ex^x(\tau_{\Hh}<t;W(t)\in dy)-\ex^x(\tau_{B}<t<\tau_{\Hh}; W(t)\in dy)\/,
\end{eqnarray*}
thus it is enough to show that 
\begin{eqnarray*}
   R(t,x,y) := \ex^x(\tau_{B}<t<\tau_{\Hh}; W(t)\in dy)/dy
\end{eqnarray*}
is dominated by $c\,\kernel_{\Hh}(t,x,y)$ for some $c<1$. By Strong Markov property we can write
	\begin{eqnarray*}
	   R(t,x,y)dy = \ex^x[\tau_B<t;\ex^{W(\tau_B)}(t<\tau_{\Hh};W(t-\tau_B)\in dy)]
	\end{eqnarray*}
	and consequently
	\begin{eqnarray*}
	   R(t,x,y) &=& \int_0^t \int_{\partial B} q_x(s,z)\kernel_{\Hh}(t-s,z,y)\,dsd\sigma(z)\\
		   &=& \int_0^t \left(\int_{A_1(s)}+\int_{A_2(s)}\right)q_x(s,z)\kernel_{\Hh}(t-s,z,y)\,dsd\sigma(z)\\
			&=:& R_1(t,x,y)+R_2(t,x,y)\/,
	\end{eqnarray*}
	where $A_1(s)=\{z\in \partial B: |x-z|^4\geq 25(\deldB{x}\vee s)\}$ and $A_2(s)=\partial B\setminus A_1(s)$. Note that the second term of the right-hand side of (\ref{eq:htxy:est}) dominates on the set $A_1(s)$, i.e.   
	\begin{eqnarray*}
	   q_x(s,z)\lesssim \frac{|x-z|^2}{s}\left(1\wedge \frac{\delB{x}|x-z|^2}{s}\right)\kernel(s,x,z)\/,\quad z\in A_1(s)\/.
	\end{eqnarray*}
	Moreover, since $|x-z|^2\geq 5\delB{x}\geq \frac52 |x-y|^2$ on $A_1(s)$, we have $|x-z|^2\geq \frac{5}{4}|x-y|^2+\frac{1}{2}|x-z|^2$ and $|x-z|^2\leq 2|x-y|^2+2|y-z|^2\leq \frac45|x-z|^2+2|y-z|^2$, which implies $|x-z|^2\leq 10|y-z|^2$. Thus, using the above-given estimates together with \eqref{eq:p_H:est_recall}, then replacing the set $A_1(s)$ simply by $\partial B$ and interchanging the integrals,  we arrive at
	\begin{equation*}
	   R_1(t,x,y)\lesssim \delB{x} \delB{y}e^{-\frac{5}{4}\frac{|x-y|^2}{4t}}\int_{\partial B}|x-z|^4z_1\int_0^t \frac{1}{s^{n/2+2}(t-s)^{n/2+1}}e^{-\frac{|x-z|^2}{8s}}e^{-\frac{|x-z|^2}{40(t-s)}}dsd\sigma(z)\/.
	\end{equation*}
	We have used here the fact that $\delta_{\Hh}(z)=z_1$ and $\delta_{\Hh}(y)=\delB{y}$. Moreover, we have
	\begin{eqnarray*}
	   \int_0^{t/2}\frac{|x-z|^4}{s^{n/2+2}(t-s)^{n/2+1}}\lefteqn{e^{-\frac{|x-z|^2}{8s}}e^{-\frac{|x-z|^2}{40(t-s)}}ds\lesssim \frac{|x-z|^4}{t^{n/2+1}}\int_0^{t/2}\frac{1}{s^{n/2+2}}e^{-\frac{|x-z|^2}{8s}}ds}\\
	&\leq&\frac{|x-z|^{4-n}}{t^{n/2+1}} \sup_{r>0}r^{n/2}e^{-r/16}\,\int_0^{t/2}\frac{1}{s^{2}}e^{-\frac{|x-z|^2}{16s}}ds	\lesssim \frac{|x-z|^{2-n}}{t^{n/2+1}}\/.
	\end{eqnarray*}
	Similarly
	\begin{eqnarray*}
	   \int_{t/2}^t\frac{|x-z|^4}{s^{n/2+2}(t-s)^{n/2+1}}\lefteqn{e^{-\frac{|x-z|^2}{8s}}e^{-\frac{|x-z|^2}{40(t-s)}}ds \lesssim \frac{|x-z|^4}{t^{n/2+2}}\int_0^{t/2}\frac1{u^{n/2+1}}e^{-\frac{|x-z|^2}{40u}}du}\\
	   &\lesssim&	 \frac{|x-z|^{6-n}}{t^{n/2+2}}\int_0^{t/2}\frac1{u^2}e^{-\frac{|x-z|^2}{40u}}du	
	   \approx\frac{|x-z|^{4-n}}{t^{n/2+2}}e^{-\frac{|x-z|^2}{20t}}\lesssim\frac{|x-z|^{2-n}}{t^{n/2+1}}\/.
	\end{eqnarray*}
	Furthermore, since $\sup_{x\in B}\int_{\partial B}|x-z|^{2-n}dz<\infty$,  we obtain
	\begin{eqnarray*}
	   R_1(t,x,y)\lesssim \frac{\delB{x}\delB{y}}{t}\kt e^{-\frac{|x-y|^2}{16t}}\/.
	\end{eqnarray*}
	In the same way, using
	\begin{eqnarray*}
	   q_x(s,z)\lesssim \frac{|x-z|^2}{s}\kernel(s,x,z)
	\end{eqnarray*}
	and $\kernel_{\Hh}(t-s,z,y)\leq \kernel(t-s,z,y)$ we arrive at
	\begin{eqnarray*}
	   R_1(t,x,y)\lesssim e^{-\frac{5}{4}\frac{|x-y|^2}{4t}}\int_{A_1}|x-z|^2z_2\int_0^t \frac{1}{s^{n/2+1}(t-s)^{n/2}}e^{-\frac{|x-z|^2}{8s}}e^{-\frac{|x-z|^2}{40(t-s)}}dsd\sigma(z)
	\end{eqnarray*}
	and we can similarly show that
		\begin{eqnarray*}
	   R_1(t,x,y)\lesssim \kt e^{-\frac{|x-y|^2}{16t}}\/.
	\end{eqnarray*}
	It finally gives
			\begin{eqnarray*}
	   R_1(t,x,y)\lesssim \left(1\wedge\frac{\delB{x}\delB{y}}{t}\right)\kt e^{-\frac{|x-y|^2}{16t}}<\frac{1}{3}\kernel_{\Hh}(t,x,y)
	\end{eqnarray*}
	for $|x-y|^2/t$ large enough. Note that if $|x-y|^2/t$ is bounded, then there is no exponential decay of the kernel $\kkt$ and consequently the lower bounds we want to show are just given in the Zhang's result \eqref{eq:Zhang:est}.
	
	The estimates of $R_2(t,x,y)$ are much more delicate. We simply begin with
	\begin{eqnarray*}
	  \kernel_{\Hh}(t,z,y) \lesssim  \frac{z_1\delB{y}}{t}\kernel(t,z,y)\/.
	\end{eqnarray*}
	Note that for $z\in A_2(s)$ we have $q_x(s,y)\lesssim \frac{\delB{x}}{s}\kernel(s,x,z)$ and consequently
	\begin{eqnarray*}
	  R_2(t,\lefteqn{x,y) = \int_0^t \int_{A_2(s)} q_x(s,z)\kernel_{\Hh}(t-s,z,y)\,dsd\sigma(z)}\\
		&\lesssim &\delB{x}\delB{y}\int_0^t \int_{A_2(s)}\frac{z_1}{s^{n/2+1}(t-s)^{n/2+1}}\exp\left(-\frac{|x-z|^2}{4s}-\frac{|z-y|^2}{4(t-s)}\right)d\sigma(z)\/.
	\end{eqnarray*}
	Since we have $\sqrt{5\delB{x}}\leq\sqrt{5/8}<0.8$ and $\sqrt[4]{25s}\leq \sqrt[4]{25t}<\sqrt[4]{25/62}<0.8$ for $t<1/62$, we get that $|x-z|<0.8$ whenever $z\in A_2(s)$. Thus $|z|\leq |x-z|+|x|<0.8+1/8=0.925$. In particular, it implies that $z_2^2+\ldots+z_n^2<c<1$, where $c=(0.925)^2$. It means that if we parametrize this part of the sphere writing $z_1=f(z_2,\ldots,z_n) = 1-\sqrt{1-(z_2^2+\ldots+z_n^2)}$ we get that 
	\begin{eqnarray*}
	   \sqrt{1+\left(\frac{\partial f}{\partial z_2}\right)^2+\ldots+\left(\frac{\partial f}{\partial z_n}\right)^2}
	\end{eqnarray*}
	is comparable with a constant and consequently we can write 
\begin{eqnarray*}
 R_2(t,x,y)\lesssim \delB{x}\delB{y}\int_0^t \int_{|\tilde{z}|\leq c}\frac{z_1}{s^{n/2+1}(t-s)^{n/2+1}}\exp\left(-\frac{|x-z|^2}{4s}-\frac{|z-y|^2}{4(t-s)}\right)d\tilde{z}\/,
\end{eqnarray*}
where $\tilde{z}=(z_2,\ldots,z_n)$ is the projection from $\R^n$ to $\R^{n-1}$. First we simply estimate $z_1=1-\sqrt{1-|\tilde{z}|^2}\leq |\tilde{z}|^2$. Next we can write
\begin{eqnarray*}
   \frac{|x-z|^2}{4s}+\frac{|z-y|^2}{4(t-s)} &=&\frac{(x_1-z_1)^2+(x_2-z_2)^2+z_3^2+\ldots+z_n^2}{4s}+\frac{(y_1-z_1)^2+|\tilde{z}|^2}{4(t-s)}\\
	&\geq&\frac{x^2_2-2z_2x_2+|\tilde{z}|^2+x^2_1-2x_1z_1}{4s}+\frac{|\tilde{z}|^2}{4(t-s)}
	\end{eqnarray*}
	by omitting two non-negative terms. Since $z_1\leq |\tilde{z}|^2$ we get
	\begin{eqnarray*}
	   \frac{|\tilde{z}|^2-2x_1z_1}{4s}+\frac{|\tilde{z}|^2}{4(t-s)}\geq \frac{|\tilde{z}|^2}{4s}\left(\frac{t}{t-s}-2x_1\right)
	\end{eqnarray*}
	Now we put $w:=\frac{t}{t-s}-2x_1$. Notice that since $t>s$ and $x_1<1/8$ we have $w\geq \frac{t}{2(t-s)}\geq \frac12$ and in particular $w$ is strictly positive. Thus we can write
	\begin{eqnarray*}
   \frac{|x-z|^2}{4s}+\frac{|z-y|^2}{4(t-s)} &\geq&
	\frac1{4s}\left(w|\tilde{z}|^2-2z_2x_2+x_2^2+x^2_1\right)\\
	&=& \frac{1}{4s}\left(w\left|\tilde{z}-\frac{1}{w}\tilde{x}\right|^2+x_2^2\left(1-\frac{1}{{w}}\right)+x_1^2\right)\/,
\end{eqnarray*}
where we have used the special form of $x$ to notice that $z_2x_2$ is just the inner product of $\tilde{z}$ and $\tilde{x}$. Moreover, using $w\geq 1/2$, we get
\begin{eqnarray*}
   1-\frac{1}{w} = \frac{s}{t}-\frac{t-s}{t}\frac{2x_1}{w}\geq \frac{s}{t}-\frac{4x_1(t-s)}{t}\/.
\end{eqnarray*}
Since $x\in B((0,1/16),1/16)$, we have $x_2^2\leq x_1^2+x_2^2\leq x_1/8$, which implies
\begin{eqnarray*}
  \frac1{4s}\left(x_2^2\left(1-\frac{1}{{w}}\right)+x_1^2\right)\geq\frac{x_1^2+x_2^2}{4t}+\frac{x_1^2(t-s)}{4st}-\frac{4x_1x_2^2(t-s)}{4st}\geq \frac{x_1^2+x_2^2}{4t} + \frac{x_1^2(t-s)}{8st}\/.
\end{eqnarray*}
Finally, using $w\geq \frac{t}{2(t-s)}$, we obtain
\begin{eqnarray*}
\int_{|\tilde{z}|\leq c}|\tilde{z}|^2\exp\left(-\frac{w}{4s}\left|\tilde{z}-\frac{1}{w}\tilde{x}\right|^2\right)d\tilde{z}&\lesssim& \left(\frac{s}{w}\right)^{(n-1)/2}\left({\frac{s}{w}}+\left|\frac{\tilde{x}}{w}\right|^2\right)\\
&\lesssim& \left(\frac{s(t-s)}{t}\right)^{(n-1)/2}\left(\frac{s(t-s)}{t}+\frac{x_2^2(t-s)^2}{t^2}\right)\/,
\end{eqnarray*}
where we just enlarge the region of the integration to the whole $\R^{n-1}$. Combining all together we arrive at
\begin{eqnarray*}
  R_2(t,x,y) &\lesssim& \frac{\delB{x}\delB{y}}{t^{(n+3)/2}}\exp\left({-\frac{|x|^2}{4t}}\right)\int_0^t \frac{st+x_2^2(t-s)}{s^{3/2}(t-s)^{1/2}}\exp\left(-\frac{x_1^2(t-s)}{8st}\right)ds\/.
\end{eqnarray*}
Then, substituting $u=x_1^2(1/s-1/t)$ we reduce the the right-hand side of the above-given inequality to
\begin{eqnarray*}
 \frac{\delB{x}\delB{y}}{t^{(n+2)/2}}\exp\left({-\frac{|x|^2}{4t}}\right)\,x_1\,\int_0^\infty \frac{1+(x_2/x_1)^2u}{u^{1/2}(u+x_1^2/t)}\exp\left(-\frac{u}{8}\right)du
\end{eqnarray*}
and the last integral can be easily bounded from above by 
\begin{eqnarray*}
   \frac{t}{x_1^2}\left(1+\left(\frac{x_2}{x_1}\right)^2\right)\int_0^\infty \frac{1+u}{\sqrt{u}}\exp\left(-\frac{u}{8}\right)du\/.
\end{eqnarray*}
Since $\delB{x}=x_1$ (see \eqref{eq:delB:x1}), $\delB{y}=y_1$, $x_1^2+x_2^2\leq x_1/8$ and $|x|^2=|x-y|^2+y_1(2x_1-y_1)\geq |x-y|^2+x_1y_1$ we obtain
\begin{eqnarray*}
  R_2(t,x,y)\lesssim \frac{x_1y_1}{t}\, \frac{t}{x_1^2} \exp\left(-\frac{x_1y_1}{4t}\right)\kt
\end{eqnarray*}
and it is clear that for every $c>0$ we can chose $m>0$ such that 
\begin{eqnarray*}
  R_2(t,x,y)\leq \frac{c}{2}\left(1\wedge\frac{x_1y_1}{t}\right)\kt
\end{eqnarray*}
for every $x$ such that $\delB{x}\geq m\sqrt{t}$. This ends the proof.

\end{proof}

The next step is to show that we can enlarge the ball $B(\frac{15}{16}\frac{y}{|y|},\frac{1}{16})$ considered in Proposition \ref{prop:smallball} to the ball $B(y/(3|y|),2/3)$ i.e. we prove the following
\begin{proposition}
\label{prop:ball12}
   There exist $C_4=C_4(n)>0$, $m=m(n)>0$ and $t_0=t_0(n)>0$ such that 
	\begin{eqnarray}
			\label{eq:lowerbounds:01}
	   \kkt\geq C_4 \left(1\wedge \frac{(1-|x|)(1-|y|)}{t}\right)\kt\/,
	\end{eqnarray}
	whenever $x\in B(y/(3|y|),2/3)$, $\del{x}\geq m\sqrt{t}$ and $t<t_0$.
\end{proposition}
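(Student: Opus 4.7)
We propagate the estimate of Proposition~\ref{prop:smallball}, which is confined to the small tangent ball $B(\tfrac{15}{16}\tfrac{y}{|y|},\tfrac{1}{16})$, to the much larger ball $B(\tfrac{y}{3|y|},\tfrac{2}{3})$ via the Chapman--Kolmogorov equation. The idea is to split the Brownian trajectory from $x$ to $y$ into a ``long'' leg from $x$ to an intermediate point $z^*$ inside the small tangent ball (handled by Proposition~\ref{prop:lower:rectangle}) followed by a ``short'' leg from $z^*$ to $y$ entirely inside that ball (handled by Proposition~\ref{prop:smallball}).

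\emph{Reductions.} We may assume $\del{y}<m\sqrt t$, since otherwise Proposition~\ref{prop:lower:rectangle} (with $C_1=m$) directly yields $\kk(t,x,y)\gtrsim \kernel(t,x,y)$, which is stronger than \eqref{eq:lowerbounds:01}. We may likewise assume $x\notin B(\tfrac{15}{16}\tfrac{y}{|y|},\tfrac{1}{16})$, since otherwise Proposition~\ref{prop:smallball} applies directly; this latter reduction guarantees $|x-y|\geq L_0$ for a positive constant $L_0=L_0(n)$.

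\emph{Chapman--Kolmogorov with an asymmetric split.} Rotate so that $y/|y|=e_1$, fix a small $\ell>0$, and set $t_2=\ell t/|x-y|$, $t_1=t-t_2$. The optimal Gaussian convolution midpoint is then $z^*_{\mathrm{opt}}=y+\ell(x-y)/|x-y|$, lying at distance $\ell$ from $y$ along the direction of $x$. For $\ell$ small enough one can choose $z^*$ inside $B(\tfrac{15}{16}e_1,\tfrac{1}{16})$ with $\del{z^*}$ bounded below by a fixed positive constant and $|z^*-z^*_{\mathrm{opt}}|$ small (with $z^*=z^*_{\mathrm{opt}}$ in the generic non-tangential configuration). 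Then
\begin{eqnarray*}
\kk(t,x,y)\;\geq\;\int_D \kk(t_1,x,z)\,\kk(t_2,z,y)\,dz,\qquad D=B\bigl(z^*,c\sqrt{t_1t_2/t}\bigr),
\end{eqnarray*}
and for $z\in D$ Propositions~\ref{prop:lower:rectangle} and~\ref{prop:smallball} give $\kk(t_1,x,z)\gtrsim \kernel(t_1,x,z)$ and $\kk(t_2,z,y)\gtrsim \tfrac{\del{z}\del{y}}{t_2}\kernel(t_2,z,y)\approx \tfrac{|x-y|\del{y}}{t}\kernel(t_2,z,y)$. The asymmetric analogue of Lemma~\ref{lemma:kt:lower} (same completing-the-square computation) yields $\int_D \kernel(t_1,x,z)\kernel(t_2,z,y)\,dz\approx \kernel(t,x,y)$, and hence
\begin{eqnarray*}
\kk(t,x,y)\;\gtrsim\;\frac{|x-y|\,\del{y}}{t}\,\kernel(t,x,y)\;\gtrsim\;\frac{\del{x}\del{y}}{t}\,\kernel(t,x,y),
\end{eqnarray*}
the last inequality following from $|x-y|\geq L_0\gtrsim \del{x}$ (as $\del{x}\leq 1$). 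This is \eqref{eq:lowerbounds:01}.

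\emph{Main obstacle.} The key difficulty is the geometric placement of $z^*$ when the segment $\overline{xy}$ meets the sphere almost tangentially at $y/|y|$: in that configuration $z^*_{\mathrm{opt}}$ hugs the boundary and $\del{z^*_{\mathrm{opt}}}$ becomes too small to apply Proposition~\ref{prop:lower:rectangle} on the long leg. The remedy is to perturb $z^*$ inward along $-e_1$ at the cost of a Gaussian loss of the form $\exp\bigl(-|z^*-z^*_{\mathrm{opt}}|^2 t/(4t_1t_2)\bigr)$; showing this loss is dominated by the target factor $\exp(-|x-y|^2/(4t))$ is exactly where the lower bound $|x-y|\geq L_0$ becomes essential.
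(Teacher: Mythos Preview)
Your approach via a single asymmetric Chapman--Kolmogorov split is genuinely different from the paper's, but it contains a real gap: the claim that $x\notin B(\tfrac{15}{16}\tfrac{y}{|y|},\tfrac{1}{16})$ forces $|x-y|\geq L_0$ for some fixed $L_0>0$ is false. The small tangent ball and the large ball $B(\tfrac{y}{3|y|},\tfrac23)$ are both internally tangent to $S(0,1)$ at $y/|y|$, so their set difference has a cusp at $y/|y|$. Concretely (with $y/|y|=e_1$, $n=2$), take $x=(1-a,b)$ with $a$ small and $b^2$ just above $\tfrac{1}{256}-(\tfrac{1}{16}-a)^2$; then $x\in B(\tfrac{e_1}{3},\tfrac23)\setminus B(\tfrac{15}{16}e_1,\tfrac{1}{16})$, $\del{x}\approx a$, and $|x-y|\approx\sqrt{a}$, which can be made arbitrarily small while keeping $\del{x}\geq m\sqrt t$ by choosing $t$ small. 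In this regime your split $t_2=\ell t/|x-y|$ gives $t_2\gg t$, so the decomposition is not even well defined; and your ``main obstacle'' paragraph, whose control of the Gaussian loss explicitly relies on $|x-y|\geq L_0$, cannot be invoked.

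The paper circumvents exactly this difficulty by an \emph{iterative} scheme: it shows that if the bound \eqref{eq:lowerbounds:01} holds on a tangent ball of radius $r$, then it holds on the tangent ball of radius $\tfrac{3r}{2}$, and repeats six times. The key geometric fact is that for $x,y$ in the larger ball with $\del{y}<\sqrt t$, the midpoint $(x+y)/2$ lands in the smaller ball; one then uses the \emph{symmetric} split $t_1=t_2=t/2$, integrates over a ball of radius $\approx\sqrt t$ near the midpoint (so Lemma~\ref{lemma:kt:lower} applies verbatim), and invokes \eqref{eq:parallel:low} to get $\del{z}\gtrsim\del{x}$ for $z$ near $(x+y)/2$. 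This avoids any lower bound on $|x-y|$ and handles the cusp region automatically. Your single-step approach would work once $|x-y|$ is bounded away from zero, but you still need an argument for the cusp case, and the natural one is precisely the paper's midpoint iteration.
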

\begin{proof}
    The idea of the proof is to show that if we know the lower bounds of the form \eqref{eq:lowerbounds:01} for every $x, y \in B((1-r)y/|y|,r)$ for some $r>0$, such that $\del{x}\geq M\sqrt{t}$ for some $M>0$ and $\del{x}\geq \del{y}$, then we can deduce the estimates of the same form for the same range of parameters but with $r$ replaced by $3r/2$ and possibly with different constants $M$ and $t_0$. Applying this procedure $6$ times ($(3/2)^6/16>2/3$) we will then get Proposition \ref{prop:ball12} from Proposition \ref{prop:smallball}. 
		
		Consequently, our starting point are the estimates \eqref{eq:lowerbounds:01} holding for $x$ and $y$ as stated above for some $r\in(0,2/3)$ with some $M>0$ and $t_0>0$. Let $x,y\in B((1-R)y/|y|,R)$, where $R=3r/2$, such that $\del{x}\geq \del{y}$ and $\del{x}\geq m\sqrt{t}$, where $m=8M$. We can additionally assume that $y$ is close to boundary by requiring $\del{y}<\sqrt{t}<R/3$, since the case $\del{x}\geq \del{y}\geq \sqrt{t}$ follows directly from Proposition \ref{prop:lower:rectangle}. Note that under our assumption, the midpoint between $x$ and $y$ belongs to the ball $B((1-r)y/|y|,r)$. Indeed, since $|y|> 1-R/3=1-r/2$, we have
		\begin{eqnarray*}
		    \frac{1}{2}-\frac{1-R}{2|y|}-\frac{R-r}{|y|}= \frac{|y|-1+r/2}{2|y|}>0
		\end{eqnarray*}
		and consequently
	\begin{eqnarray*}
     \left|\frac{x+y}{2}-\frac{(1-r)y}{|y|}\right| &=&  	 \left|\frac{x}{2}-\frac{(1-R)y}{2|y|}+y\left(\frac{1}{2}-\frac{(1-R)}{2|y|}-\frac{R-r}{|y|}\right)\right| \\
		&\leq &\frac{1}{2}\left|x-\frac{(1-R)y}{|y|}\right|+|y|\left(\frac{1}{2}-\frac{1-R}{2|y|}-\frac{R-r}{|y|}\right)\\
		&=& \frac{1}{2}\left|x-\frac{(1-R)y}{|y|}\right|+\frac{1}{2}\left|y-\frac{(1-R)y}{2|y|}\right|-(R-r)\\
		&\leq& \frac{R}{2}+\frac{R}{2}-(R-r) = r\/.
		\end{eqnarray*}
		We fix $t_1=t_0\wedge (8r/m)^2$ and for $t<t_1$ we consider $A$ as a point on the line between $(x+y)/2$ and $(1-r)y/|y|$ such that $(x+y)/2$ belongs to the sphere $S(A,m\sqrt{t}/16)$. Such a choice ensures that the ball $B(A,m\sqrt{t}/16)$ is contained in $B((1-r)y/|y|,r)$ as well. Moreover, since we have $\del{(x+y)/2}\geq \del{x}/4$ (see \eqref{eq:parallel:low}), for every $z\in B(A,m\sqrt{t}/4)$ we have 
		\begin{eqnarray*}
		   \del{z}\geq \del{\frac{x+y}{2}}-\frac{m\sqrt{t}}{8}\geq \frac{\del{x}}{8}\geq\frac{m\sqrt{t}}{8}=M\sqrt{t}\/.
		\end{eqnarray*}
		We use the Chapmann-Kolmogorov equation to get
		\begin{eqnarray*}
   \kk(t,x,y)&\geq& \int_{B(A,m\sqrt{t}/16)}\kk(t/2,x,z)\kk(t/2,z,y)dz\/.
	\end{eqnarray*}
		We have $\kk(t,x,z)\gtrsim \kernel(t,x,z)$ since $\del{z}\geq M\sqrt{t}$ and $\del{x}\geq m\sqrt{t}$. Moreover, since $y,z\in B((1-r)y/|y|,r)$, we have also
		\begin{eqnarray*}
		   \frac{\kk(t,z,y)}{\kernel(t,z,y)}\gtrsim \left(1\wedge \frac{\del{z}\del{y}}{t}\right)\gtrsim  \left(1\wedge \frac{\del{x}\del{y}}{t}\right)\/.
		\end{eqnarray*}
		Using these estimates and Lemma \ref{lemma:kt:lower} (note that $|A-(x+y)/2|=m\sqrt{t}/16$) we obtain the desired lower bounds for $x$ and $y$ in the larger ball. 
\end{proof}

To make the last step of the proof we consider two points $x,y\in B(0,1)$ and two balls $B(x/(3|x|),2/3)$ and $B(y/(3|y|),2/3)$. It is geometrically clear that the midpoint $(x+y)/2$ as well as $0$ belong to both of them. In view of Proposition \ref{prop:lower:rectangle}, we can additionally assume that one of the variables is close to the boundary, i.e. $\del{y}\leq \sqrt{t}$. Now we consider two cases. The first one relates to the situation when the midpoint is close to the origin, i.e. $(x+y)/2\in B(0,1/6)$. Then 
\begin{eqnarray*}
  B((x+y)/2,1/6)\subset B(0,1/3)\subset B(x/(3|x|),2/3) \cap B(y/(3|y|),2/3)
	\end{eqnarray*}
The Chapman-Kolmogorov once again implies that
\begin{eqnarray*}
   \kk(2t,x,y)\geq \int_{B((x+y)/2,1/6)}\kk(t,x,z)\kk(t,z,y)dz 
\end{eqnarray*}
We can use Proposition \ref{prop:ball12} to write $\kk(t,x,z)\gtrsim (1\wedge \del{x}/t)\kernel(t,x,z)$ and $\kk(t,z,y)\gtrsim (1\wedge \del{y}/t)\kernel(t,z,y)$, since $\del{z}>5/6>m\sqrt{t}$, where $m$ is the constant  from Proposition \ref{prop:ball12} and $t$ is small enough. Thus we obtain
\begin{eqnarray*}
   \kk(2t,x,y) &\gtrsim&  \left(1\wedge \frac{\del{x}}{t}\right)\left(1\wedge \frac{\del{y}}{t}\right)\int_{B((x+y)/2,1/6)}\kernel(t,x,z)\kernel(t,z,y)dz \\
	&\gtrsim& \left(1\wedge \frac{\del{x}}{t}\right)\left(1\wedge \frac{\del{y}}{t}\right)\kernel(t,x,y)
\end{eqnarray*}
where we used Lemma \ref{lemma:kt:lower}. Note that if $y$ is close to the boundary and $(x+y)/2$ is close to the origin, then $|x-y|$ is bounded away from $0$, which implies $h(t,x,y)\approx(1\wedge \del{x}/t)(1\wedge \del{y}/t)$. It ends the proof in this case. 

In the remaining case, i.e. when $|(x+y)/2|\geq 1/6$ we set $t_1=1/(18m)^2 \wedge t_0$ and consider $t<t_1$. Here $m$ and $t_0$ are the constant from Proposition \ref{prop:ball12}. We also define $A$ to be a point on the line between $(x+y)/2$ and $0$ such that $|A-(x+y)/2|=2m\sqrt{t}$. Then we have $B(A,m\sqrt{t})\subset B(x/(3|x|),2/3) \cap B(y/(3|y|),2/3)$ and for every $z\in B(A,m\sqrt{t})$ we get $\del{z}\geq \del{(x+y)/2}+m\sqrt{t}$. Thus we write for the last time that
\begin{eqnarray*}
\kk(2t,x,y) &\geq& \int_{B(A,\sqrt{t})}\kk(t,x,z)\kk(t,z,y)dz \\
&\geq& \int_{B(A,\sqrt{t})}\left(1\wedge \frac{\del{x}\del{z}}{t}\right)\left(1\wedge \frac{\del{y}\del{z}}{t}\right)\kernel(t,x,z)\kernel(t,z,y)dz
\end{eqnarray*}
If we use the estimates $\del{z}\gtrsim \del{(x+y)/2}\gtrsim |x-y|^2$ (by \eqref{eq:parallel:low}) and Lemma \ref{lemma:kt:lower}, we arrive at
\begin{eqnarray*}
  \kk(2t,x,y) \gtrsim \left(1\wedge \frac{\del{x}|x-y|^2}{t}\right)\left(1\wedge \frac{\del{x}|x-y|^2}{t}\right)\kernel(2t,x,y)\/.
\end{eqnarray*}
On the other hand, if $\del{x}\leq \sqrt{t}$ we can write
\begin{eqnarray*}
    \left(1\wedge \frac{\del{x}\del{z}}{t}\right)\left(1\wedge \frac{\del{y}\del{z}}{t}\right)\gtrsim \frac{\del{x}}{\sqrt{t}}\frac{\del{y}}{\sqrt{t}} \approx \left(1\wedge \frac{\del{x}\del{y}}{t}\right)\/,
\end{eqnarray*}
which gives $k(2t,x,y)\gtrsim h(2t,x,y)k(2t,x,y)$. Finally, for $\del{x}\geq \sqrt{t}\geq \del{y}$ we have $\del{x}\del{z}\geq mt$ and consequently 
\begin{eqnarray*}
    \left(1\wedge \frac{\del{x}\del{z}}{t}\right)\left(1\wedge \frac{\del{y}\del{z}}{t}\right)\gtrsim \left(1\wedge \frac{\del{y}\del{z}}{{t}}\right) \gtrsim \left(1\wedge \frac{\del{x}\del{y}}{t}\right)\/,
\end{eqnarray*}
since $\del{z}\gtrsim \del{(x+y)/2}\gtrsim \del{x}$ by \eqref{eq:parallel:low}. This ends the proof.

 \bibliography{bibliography}    

\begin{thebibliography}{10}

\bibitem{CheegerYau:1981}
J.~Cheeger and S.~T. Yau.
\newblock A lower bound for the heat kernel.
\newblock {\em Comm. Pure Appl. Math.}, 34(4):465--480, 1981.

\bibitem{CMM:2000}
P.~Collet, S.~Mart{\'{\i}}nez, and J.~San~Mart{\'{\i}}n.
\newblock Asymptotic behaviour of a {B}rownian motion on exterior domains.
\newblock {\em Probab. Theory Related Fields}, 116(3):303--316, 2000.

\bibitem{Davies:1987}
E.~B. Davies.
\newblock The equivalence of certain heat kernel and {Green} function bounds.
\newblock {\em J. Funct. Anal.}, 71:88--103, 1987.

\bibitem{DaviesSimon:1984}
E.~B. Davies and B.~Simon.
\newblock Ultracontractivity and heat kernels for {Schrödinger} operators and
  {Dirichlet} {Laplacians}.
\newblock {\em J. Funct. Anal.}, 59:335--395, 1984.

\bibitem{GrigorSaloff:2002}
A.~Grigor'yan and L.~Saloff-Coste.
\newblock Dirichlet heat kernel in the exterior of a compact set.
\newblock {\em Comm. Pure Appl. Math.}, 55(1):93--133, 2002.

\bibitem{H}
P.~Hsu.
\newblock Brownian exit distribution of a ball.
\newblock In {\em Seminar on stochastic processes, 1985 ({G}ainesville, {F}la.,
  1985)}, volume~12 of {\em Progr. Probab. Statist.}, pages 108--116.
  Birkh\"auser Boston, Boston, MA, 1986.

\bibitem{Kac:1951}
M.~Kac.
\newblock On some connections between probability theory and differential and
  integral equations.
\newblock In {\em Proceedings of the {S}econd {B}erkeley {S}ymposium on
  {M}athematical {S}tatistics and {P}robability, 1950}, pages 189--215.
  University of California Press, Berkeley and Los Angeles, 1951.

\bibitem{Kac:1966}
M.~Kac.
\newblock Can one hear the shape of a drum?
\newblock {\em Amer. Math. Monthly}, 73(4, part II):1--23, 1966.

\bibitem{KimSong:2006}
P.~Kim and R.~Song.
\newblock Two-sided estimates on the density of {B}rownian motion with singular
  drift.
\newblock {\em Illinois J. Math.}, 50(1-4):635--688, 2006.

\bibitem{MSZ:2016}
J.~Ma\l{}ecki, G.~Serafin, and T.~Zorawik.
\newblock Fourier-{B}essel heat kernel estimates.
\newblock {\em J. Math. Anal. Appl.}, 439(1):91--101, 2016.

\bibitem{PSZ}
A.~Py{\'c}, G.~Serafin, and T.~{\.Z}ak.
\newblock Supremum distribution of {B}essel process of drifting {B}rownian
  motion.
\newblock {\em Probab. Math. Statist.}, 35(2):201--222, 2015.

\bibitem{S}
G.~Serafin.
\newblock Exit times densities of {B}essel process.
\newblock {\em Proceedings of the American Mathematical Society}, in press.

\bibitem{Berg:1990}
M.~van~den Berg.
\newblock Gaussian bounds for the {D}irichlet heat kernel.
\newblock {\em J. Funct. Anal.}, 88(2):267--278, 1990.

\bibitem{Zhang:2002}
Q.~S. Zhang.
\newblock The boundary behavior of heat kernels of {Dirichlet} {Laplacians}.
\newblock {\em J. Differential Equations}, 182:416--430, 2002.

\bibitem{Zhang:2003}
Q.~S. Zhang.
\newblock The global behavior of heat kernels in exterior domains.
\newblock {\em J. Funct. Anal.}, 200(1):160--176, 2003.

\end{thebibliography}
\bibliographystyle{plain}

\end{document}